\newtheorem{thm}{Theorem}
\newtheorem{cor}[thm]{Corollary}
\newtheorem{lem}[thm]{Lemma}
\newtheorem{prop}[thm]{Proposition}
\newtheorem{exa}[thm]{Example}
\theoremstyle{definition}
\newtheorem{defn}[thm]{Definition}
\begin{document}
\noindent ({\it J. Algebra, to appear})
\title[Strongly clean matrices]{Strong cleanness of
the $2\times 2$ matrix ring over a general local ring}
\author[Yang and Zhou]{Xiande Yang and Yiqiang Zhou}

\address{Department of Mathematics and Statistics, Memorial University of
Newfoundland, St.John's, Newfoundland A1C 5S7, Canada\\
and Department of Mathematics, Harbin Institute of Technology, Harbin 150001, China}
\email{xiande.yang@gmail.com}
\address{Department of Mathematics and Statistics, Memorial University of
Newfoundland, St.John's, Newfoundland A1C 5S7, Canada}
\email{zhou@math.mun.ca}

\thanks{Corresponding author: Yiqiang Zhou, Department of
Mathematics and Statistics, Memorial University of Newfoundland, St.John's,
NL A1C 5S7, Canada,
Tel: 1-709-737-8797, Fax: 1-709-737-3010}
\date{Submitted in November 2006; revised in June 2007 and in February 2008}
%\today
\maketitle
\begin{abstract}A ring $R$ is called strongly clean if every element of $R$ is the sum of a
unit and an idempotent that commute with each other. A recent result of
Borooah, Diesl and Dorsey \cite{BDD05a} completely characterized the commutative local rings
$R$ for which ${\mathbb M}_n(R)$ is strongly clean.  For a general local ring $R$ and $n>1$,
however, it is unknown when the matrix ring ${\mathbb M}_n(R)$ is strongly clean.
Here we completely determine the local rings $R$ for which ${\mathbb M}_2(R)$ is strongly clean.

\bigskip
\noindent Key Words: {\it Strongly clean rings, strongly $\pi$-regular rings,
local rings, matrix rings.}\\
\noindent 2000 Mathematics Subject Classification: {\it Primary 16S50, 16U99; Secondary 16L30.}
\end{abstract}

\bigskip
\section{Introduction}
In this paper, all rings are associative with unity and all modules are unitary.
For a module $M$ over a ring $R$, the $R$-homomorphisms of $M$ are written on the opposite
side of their arguments, and the ring of endomorphisms of $M$ is denoted by ${\rm End}(M_R)$ or
${\rm End}(_RM)$. We begin
by recalling a well-known notion in ring theory. An element $a$ in a ring $R$ is called {\it strongly
$\pi$-regular} if both chains $aR\supseteq a^2R\supseteq \cdots$ and $Ra\supseteq Ra^2\supseteq
\cdots$ terminate, and the ring $R$ is called {\it strongly $\pi$-regular} if every element of $R$ is
strongly $\pi$-regular (or equivalently, the chain $aR\supseteq a^2R\supseteq \cdots$ terminates
for all $a\in R$, by Dischinger \cite{D76}). Thus, one-sided perfect rings are strongly
$\pi$-regular. A result of Armendariz,
Fisher and Snider \cite{AFS78} says that for a module $M_R$, $\varphi\in {\rm End}(M_R)$ is
strongly $\pi$-regular iff
$M={\rm ker}(\varphi^n)\oplus {\rm Im}(\varphi^n)$ for some $n\ge 1$ (i.e., $\varphi$ is a so called
Fitting endomorphism). The notion of a strongly clean ring was introduced by Nicholson \cite{N99}
in 1999. An element $a$ of a ring $R$ is called {\it strongly clean} if $a=e+u$ where $e^2=e\in R$ and $u$
is a unit of $R$ with $eu=ue$, and the ring $R$ is called {\it strongly clean} if each of its elements is
strongly clean. Clearly, local rings are strongly clean. By a result of Burgess and Menal
\cite{BM88}, every strongly $\pi$-regular ring is strongly clean. In \cite{N99}, Nicholson gave a
direct proof of the result that every strongly $\pi$-regular element of a ring is strongly clean,
and furthermore he offered the interesting viewpoint that strongly clean elements are natural
generalizations of the strongly
$\pi$-regular elements by establishing the following results: for $\varphi\in {\rm End}(M_R)$,
$\varphi$ is strongly $\pi$-regular iff there exists a decomposition $M=P\oplus Q$ such that
$\varphi:P\rightarrow P$ is an isomorphism and $\varphi: Q\rightarrow Q$ is nilpotent; and $\varphi$
is strongly clean iff there exists a decomposition $M=P\oplus Q$
such that $\varphi: P\rightarrow P$ and $1-\varphi: Q\rightarrow Q$ are isomorphisms.

\medskip
In considering whether the class of strongly clean rings is Morita invariant,
Nicholson \cite{N99} raised two questions: if $R$ is strongly clean with $e^2=e\in R$,
is $eRe$ strongly clean? is ${\mathbb M}_n(R)$ strongly clean? In her 2002
unpublished manuscript \cite{S02}, S\'{a}nchez Campos answered the first question affirmatively
and gave a counter-example to the second question. In 2004, Wang and Chen \cite{WC04},
independently, published a counter-example to the second question. Surprisingly, the authors
of the two articles came to the same counter-example ${\mathbb Z}_{(2)}$, the localization of
$\mathbb Z$ at the prime ideal $(2)$. This motivated the  authors of \cite{BDD05a, CYZ05, CYZ06}
to
consider the question: when is ${\mathbb M}_n(R)$ strongly clean? Observing a pattern of the
$2\times 2$ idempotent matrices over a commutative local ring, using techniques from linear algebra
the authors of \cite{CYZ05, CYZ06} characterized the commutative local rings $R$ for which
${\mathbb M}_2(R)$ is strongly clean. The authors of \cite{BDD05a} had a different approach to
this question. Using Nicholson's decomposition theorem, and considering  different types of
factorization in $R[t]$, for each $n$ they characterized the commutative local rings $R$ for
which ${\mathbb M}_n(R)$ is strongly clean. Thus, the above question is completely settled when
$R$ is a commutative local ring.

\medskip
In this paper, we continue the study of this question, focusing on
the question of when ${\mathbb M}_2(R)$ is strongly clean, for
noncommutative local rings $R$. Following P.M. Cohn
\cite[p.17]{C85}, a ring $R$ is called {\it projective-free} if
every finitely generated projective $R$-module is free of unique
rank. In Section 2, using the aforementioned decomposition theorem
of Nicholson found in \cite{N99}, we prove that, for a
projective-free ring $R$,  all `non trivial' strongly clean matrices
of ${\mathbb M}_n(R)$ are similar to a certain type of block
diagonal matrix. For a local ring $R$ with $n=2$, this simply says
that $A\in {\mathbb M}_2(R)$ is strongly clean iff either $A$ is
invertible or $I-A$ is invertible or $A$ is similar to
 $\begin{pmatrix}t_0&0\\
                       0&t_1\end{pmatrix}$, where $1-t_0, t_1\in J(R)$. This result is put to use
when we establish some (easily verifiable) criteria for a $2\times 2$ matrix ring over a local ring
to be strongly clean, and, as consequences, new families of strongly clean rings are presented in
Section 3. It is noticed that the same idea can be used to characterize the local rings $R$ for
which ${\mathbb M}_2(R)$ is strongly $\pi$-regular, and this discussion is recorded in Section 4.

\medskip
As usual, ${\mathbb Z}$ denotes the ring of integers. The polynomial ring over a ring $R$ in the
indeterminate $t$ is denoted by $R[t]$. For an endomorphism $\sigma$ of a ring $R$ with
$\sigma (1)=1$, let $R[[x,\sigma]]$ denote the ring of left skew power series over $R$. Thus,
elements of $R[[x,\sigma]]$ are power series in $x$ with coefficients in $R$ written on the left,
subject to the relation $xr=\sigma(r)x$ for all $r\in R$. The Jacobson radical and the group of
units of a ring $R$ are denoted by $J(R)$ and $U(R)$ respectively. For an integer $n>0$, we
write ${\mathbb M}_n(R)$ for the $n\times n$ matrix ring over $R$ whose identity element we write as $I_n$ or $I$,
and ${\rm GL}_n(R)$ for the group of all invertible $n\times n$ matrices over $R$.

\section{Strongly clean matrix rings}

In this section, we obtain some necessary and sufficient conditions for a $2\times 2$ matrix ring
over a local ring to be strongly clean, which  will be used to give new families of strongly clean
rings in the next section.

\medskip
Let $F$ be a free $R$-module with a basis
$\{v_1,v_2,\ldots, v_n\}$ and let $\varphi \in {\rm End}(F_R)$. Then for each $1\le j\le n$,
\begin{equation*}
\varphi (v_j)=\sum_{i=1}^{n}v_ia_{ij}
\end{equation*}
for some $a_{ij}\in R$. Write $M_{\varphi}=(a_{ij})\in {\mathbb M}_n(R)$. It is well known that
the map  ${\rm End}(F_R)\rightarrow {\mathbb M}_n(R)$, given by $\varphi\mapsto M_{\varphi}$, is a
ring isomorphism.
Moreover, changing the basis of $F_R$ yields conjugate matrices (via a change of basis matrix).

\medskip
We need Nicholson's characterization of strongly clean matrices, which is a transition from a result of
his we are recalling.
\begin{lem}\label{lem:1}\cite[Theorem 3]{N99}
Let $M_R$ be a module. The following are equivalent for $\varphi\in {\rm End}(M_R)$:
\begin{enumerate}
\item $\varphi$ is strongly clean in ${\rm End}(M_R)$.
\item There is a decomposition $M=P\oplus Q$ where $P$ and $Q$ are $\varphi$-invariant, and
$\varphi |_{P}$ and $(1-\varphi)|_Q$ are isomorphisms.
\end{enumerate}
\end{lem}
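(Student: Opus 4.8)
The plan is to prove the two implications directly, translating back and forth between an ``idempotent plus commuting unit'' expression for $\varphi$ and a $\varphi$-invariant direct-sum decomposition of $M$. The bridge in both directions is the elementary observation that an idempotent $e\in {\rm End}(M_R)$ commutes with $\varphi$ precisely when each of the submodules $eM$ and $(1-e)M$ is $\varphi$-invariant.

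For $(1)\Rightarrow(2)$, I would start from $\varphi=e+u$ with $e^2=e$, $u\in U({\rm End}(M_R))$ and $eu=ue$, and set $P=(1-e)M$, $Q=eM$, so that $M=P\oplus Q$ with both summands $\varphi$-invariant (since $e$ commutes with $u$, hence with $\varphi$). On $P$ the idempotent acts as $0$, so $\varphi|_P=u|_P$; on $Q$ it acts as the identity, so $(1-\varphi)|_Q=(-u)|_Q$. The one point deserving a moment's care is that $ue=eu$ forces $u^{-1}e=eu^{-1}$ as well, so that $u$ and $u^{-1}$ both preserve $P$ and $Q$; hence $u|_P$ and $(-u)|_Q$ are genuine automorphisms, which is exactly (2).

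For $(2)\Rightarrow(1)$, given $M=P\oplus Q$ with $P,Q$ $\varphi$-invariant and $\varphi|_P$, $(1-\varphi)|_Q$ isomorphisms, I would take $e\in{\rm End}(M_R)$ to be the projection of $M$ onto $Q$ along $P$. Then $e^2=e$, and the $\varphi$-invariance of $P$ and $Q$ yields $e\varphi=\varphi e$. Put $u=\varphi-e$; then $\varphi=e+u$ and $eu=ue$, while $u$ restricts on $P$ to $\varphi|_P$ and on $Q$ to $-(1-\varphi)|_Q$, both isomorphisms, so $u$ is an automorphism of $M=P\oplus Q$, i.e.\ a unit of ${\rm End}(M_R)$. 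This exhibits $\varphi$ as strongly clean. The whole argument is essentially bookkeeping with block-diagonal endomorphisms; no substantial obstacle is anticipated beyond the care with inverses noted above.
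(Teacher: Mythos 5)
Your argument is correct: both implications are the standard translation between a commuting idempotent--unit expression and a $\varphi$-invariant decomposition, including the needed observation that $eu=ue$ forces $eu^{-1}=u^{-1}e$. The paper itself gives no proof of this lemma (it is quoted as Nicholson's Theorem 3 from \cite{N99}), and your proof is essentially the same argument that underlies Nicholson's result, so there is nothing to change.
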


A unit $a$ of a ring $R$ is strongly clean because $a=0+a$. If $1-a$ is a unit of $R$, then $a$ is
also strongly clean because $a=1+(a-1)$. A strongly clean element $a\in R$ is called a {\it trivial
strongly clean element} if $a$ is a unit or $1-a$ is a unit, and is called {\it non-trivial}
otherwise.

\medskip
The next lemma is a translation of Nicholson's decomposition in Lemma \ref{lem:1} to matrices,
but
this translation is a useful tool for this paper. The hypothesis here is based on following the
approach of \cite{BDD05a}, and this observation is implicitly used there.

\begin{lem}\label{lem:2}
Let $R$ be a projective-free ring. Then
$A\in {\mathbb M}_n(R)$ is a non-trivial strongly clean matrix
iff $A$ is similar to $\begin{pmatrix}T_0&0\\
                       0&T_1\end{pmatrix}$, where $T_0$ and $I-T_1$ are both invertible and neither
$I-T_0$ nor $T_1$ is invertible.
\end{lem}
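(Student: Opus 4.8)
The plan is to pass back and forth between matrices over $R$ and endomorphisms of the free module $F = R^n$, using the ring isomorphism $\varphi \mapsto M_\varphi$ together with Lemma~\ref{lem:1}. Throughout, the projective-free hypothesis is what lets us convert an abstract direct-sum decomposition of $F$ into a block-diagonal form: any $\varphi$-invariant summand $P$ of $F$ is finitely generated projective, hence free, and a complementary summand $Q$ is free as well, with ${\rm rank}(P) + {\rm rank}(Q) = n$ (rank is unique). Choosing bases of $P$ and $Q$ gives a basis of $F$ in which $\varphi$ is block diagonal, and the change-of-basis matrix is invertible, so the passage really does produce a matrix \emph{similar} to $A$.

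For the forward direction, suppose $A$ is non-trivial strongly clean. Let $\varphi \in {\rm End}(F_R)$ correspond to $A$. By Lemma~\ref{lem:1} there is a decomposition $F = P \oplus Q$ with $P, Q$ both $\varphi$-invariant and $\varphi|_P$, $(1-\varphi)|_Q$ isomorphisms. Using projective-freeness as above, choose a basis of $F$ adapted to $P \oplus Q$; then $\varphi$ becomes $\begin{pmatrix} T_0 & 0 \\ 0 & T_1 \end{pmatrix}$ for some square matrices $T_0, T_1$, where $T_0$ represents $\varphi|_P$ and $T_1$ represents $\varphi|_Q$. Since $\varphi|_P$ is an isomorphism of a free module, $T_0$ is invertible; since $(1-\varphi)|_Q$ is an isomorphism, $I - T_1$ is invertible. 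It remains to see that neither $I - T_0$ nor $T_1$ is invertible — and this is exactly where non-triviality is used. If $I - T_0$ were invertible, then both diagonal blocks of $I - \begin{pmatrix} T_0 & 0 \\ 0 & T_1 \end{pmatrix}$ would be invertible, making $I - A$ invertible, contradicting non-triviality; similarly if $T_1$ were invertible, both diagonal blocks of the displayed matrix would be invertible, forcing $A$ invertible, again a contradiction.

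For the converse, suppose $A$ is similar to $B = \begin{pmatrix} T_0 & 0 \\ 0 & T_1 \end{pmatrix}$ with $T_0, I - T_1$ invertible and neither $I - T_0$ nor $T_1$ invertible. Since strong cleanness and triviality are both preserved under similarity (conjugating an idempotent-plus-unit decomposition that commutes gives another such, and conjugation preserves invertibility), it suffices to treat $B$ itself. Write $E = \begin{pmatrix} 0 & 0 \\ 0 & I \end{pmatrix}$ and $U = B - E = \begin{pmatrix} T_0 & 0 \\ 0 & T_1 - I \end{pmatrix}$. Then $E$ is idempotent, $E$ commutes with $B$ (hence with $U$) because both blocks are diagonal, and $U$ is invertible because $T_0$ and $T_1 - I$ are. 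So $B = E + U$ is strongly clean. Finally, $B$ is non-trivial: $B$ is not invertible because $T_1$ is not invertible (the lower block is singular, so the map it represents is not surjective onto the corresponding summand), and $I - B = \begin{pmatrix} I - T_0 & 0 \\ 0 & I - T_1 \end{pmatrix}$ is not invertible because $I - T_0$ is not invertible. Hence $A$ is non-trivial strongly clean.

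I expect the main obstacle — or at least the only point needing genuine care — to be the conversion between the module-theoretic decomposition of Lemma~\ref{lem:1} and an honest block-diagonal matrix: one must invoke projective-freeness to know $P$ and $Q$ are free, invoke uniqueness of rank to get that the two block sizes add to $n$, and then be careful that choosing adapted bases really does conjugate $A$ rather than merely producing an abstractly isomorphic endomorphism. Everything else — idempotency of $E$, commutativity, the unit checks, and the bookkeeping showing that $I - T_0$ singular (resp. $T_1$ singular) is equivalent to $I - A$ (resp. $A$) being non-invertible — is routine once the dictionary between ${\rm End}(F_R)$ and ${\mathbb M}_n(R)$ is set up. It is also worth noting that the statement does not require the blocks to be nonzero in size; when $R$ is local and $n = 2$ one recovers the concrete trichotomy quoted in the introduction, with $T_0, T_1$ forced to be $1 \times 1$ scalars $t_0, t_1$ satisfying $1 - t_0, t_1 \in J(R)$.
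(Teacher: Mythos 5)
Your proof is correct and follows exactly the paper's (very terse) argument: apply Lemma~\ref{lem:1}, use projective-freeness to make $P$ and $Q$ free, and read off the block-diagonal matrix in an adapted basis, with the remaining verifications (invertibility of the blocks, non-triviality, and the converse via $E=\begin{pmatrix}0&0\\0&I\end{pmatrix}$) being the routine details the paper leaves implicit. No discrepancies worth noting.
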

\begin{proof} Make the obvious choice of basis, and write down the (block diagonal) matrix
with respect to this basis.
\end{proof}

A local ring is projective-free (see \cite[Corollary 5.5, p.22]{C85}), and
this is why commutative local rings are a natural place to start looking at strongly clean matrix
rings, and why the approach of \cite{BDD05a} works.

\medskip
For $2\times 2$ matrices over a local ring $R$, it is clear that  $A\in {\mathbb M}_2(R)$ is a non-trivial strongly clean matrix iff $A$ is similar to $\begin{pmatrix}t_0&0\\
                       0&t_1\end{pmatrix}$, where $1-t_0\in J(R)$ and $t_1\in J(R)$.

\medskip
Another class of projective-free rings are the (commutative) principal ideal domains.  The claim of the next example follows by Lemma \ref{lem:2}.

\begin{exa}\label{exa:6}
$A\in {\mathbb M}_2({\mathbb Z})$ is a non-trivial strongly clean matrix iff $A$ is similar to one
of the elements in  $\Bigl\{\begin{pmatrix}1&0\\
                       0&0\end{pmatrix}, \begin{pmatrix}-1&0\\
                       0&0\end{pmatrix}, \begin{pmatrix}1&0\\
                       0&2\end{pmatrix}$ and $\begin{pmatrix}-1&0\\
                       0&2\end{pmatrix}\Bigr\}$.
\end{exa}

One of the primary things that makes $2\times 2$ matrix rings over local rings easier to deal with than general matrix rings is that all matrices which are neither a unit nor $I$ minus a unit are similar to companion matrices, as the next lemma demonstrates.

\begin{lem}\label{lem:8}Let $R$ be a local ring and let $A\in {\mathbb M}_2(R)$.
Then either $A$ is invertible or $I-A$ is invertible or $A$ is similar to
$\begin{pmatrix}0&w_0\\
                       1&1+w_1\end{pmatrix}$ where $w_0,w_1\in J(R)$.
\end{lem}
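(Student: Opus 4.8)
The plan is to pass to the quotient by the Jacobson radical and hunt for a cyclic vector. Let $F$ be free with basis $\{v_1,v_2\}$ and let $\varphi\in{\rm End}(F_R)$ be the endomorphism with $M_{\varphi}=A$. We may assume $A$ is not invertible and $I-A$ is not invertible, since otherwise we are already in one of the first two cases. In any ring an element is a unit if and only if its image modulo the Jacobson radical is a unit, and $J({\mathbb M}_2(R))={\mathbb M}_2(J(R))$; hence, writing $D=R/J(R)$ (a division ring) and $\bar A\in{\mathbb M}_2(D)$ for the reduction of $A$, neither $\bar A$ nor $I-\bar A$ is invertible over $D$. For later use I also note that a matrix over $R$ lies in ${\rm GL}_2(R)$ exactly when its reduction lies in ${\rm GL}_2(D)$.

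The heart of the argument is to produce $v\in F$ with $\{v,\varphi(v)\}$ an $R$-basis of $F$; equivalently, by the previous paragraph applied to the matrix $[\,v\mid\varphi(v)\,]$, a vector $\bar v$ with $\{\bar v,\bar A\bar v\}$ a $D$-basis of $D^2$. Suppose no such $\bar v$ exists. Then for every nonzero $\bar v$ the pair $\{\bar v,\bar A\bar v\}$ is linearly dependent, and over a division ring this forces $\bar A\bar v=\bar v\lambda$ for some scalar $\lambda$ (allowing $\lambda=0$). Feeding in the two standard basis vectors shows $\bar A$ is diagonal, and feeding in their sum shows its two diagonal entries agree; so $\bar A=\lambda I$ for a single $\lambda\in D$. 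But then $\bar A$ is invertible when $\lambda\ne 0$ and $I-\bar A=I$ is invertible when $\lambda=0$, contradicting the first paragraph. Hence a cyclic $\bar v$ exists; lift it to any $v\in F$, and since $[\,\bar v\mid\bar A\bar v\,]\in{\rm GL}_2(D)$ we get $[\,v\mid\varphi(v)\,]\in{\rm GL}_2(R)$, so $\{v,\varphi(v)\}$ is a basis of $F$. This is the one step needing any care, and the care is only in remembering that $D$ may be noncommutative and in tracking the left/right conventions coming from writing homomorphisms on the left of a right module; it is not a real obstacle.

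With respect to the basis $\{v,\varphi(v)\}$ the endomorphism $\varphi$ is represented by a companion matrix $\begin{pmatrix}0&c_0\\ 1&c_1\end{pmatrix}$, where $c_0,c_1\in R$ are the unique elements with $\varphi^2(v)=vc_0+\varphi(v)c_1$, and this matrix is similar to $A$. It remains to check $c_0\in J(R)$ and $c_1-1\in J(R)$. Reducing mod $J(R)$, the matrix $\begin{pmatrix}0&\bar c_0\\ 1&\bar c_1\end{pmatrix}$ is similar to $\bar A$ and hence not invertible, which over the division ring $D$ forces $\bar c_0=0$; then $I$ minus it is $\begin{pmatrix}1&0\\ -1&1-\bar c_1\end{pmatrix}$, which is not invertible, forcing $\bar c_1=1$. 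Thus $w_0:=c_0$ and $w_1:=c_1-1$ lie in $J(R)$, and $A$ is similar to $\begin{pmatrix}0&w_0\\ 1&1+w_1\end{pmatrix}$, as required.
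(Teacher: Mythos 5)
Your proof is correct, and while it shares the paper's overall skeleton (reduce modulo $J(R)$, produce a cyclic vector, pass to the companion matrix in the cyclic basis, and pin down its entries), the two key steps are carried out by genuinely different means. The paper constructs the cyclic vector explicitly: since neither $\overline{\varphi}$ nor $1-\overline{\varphi}$ is invertible over $\overline{R}=R/J(R)$, one has $\overline{R}^2=\ker(\overline{\varphi})\oplus\ker(1-\overline{\varphi})$, and the lift of $x=v+w$ (with $v,w$ nonzero vectors in the two kernels) is cyclic; it then upgrades ``spanning'' (Nakayama) to ``basis'' by invoking stable finiteness of local rings, and reads off $\bar a=0$, $\bar b=1$ directly from the coordinates of $\overline{\varphi}^{\,2}(x)$ in the eigenbasis $\{v+w,\,w\}$. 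You instead obtain the cyclic vector non-constructively: if no $\bar v$ were cyclic, every nonzero $\bar v$ would satisfy $\bar A\bar v=\bar v\lambda$, forcing $\bar A$ to be a scalar matrix, whence $\bar A$ or $I-\bar A$ is invertible, a contradiction; you then get a basis over $R$ by noting that the coordinate matrix $[\,v\mid\varphi(v)\,]$ is invertible because its reduction is, which neatly replaces the Nakayama-plus-stable-finiteness step, and you pin down $\bar c_0=0$ and $\bar c_1=1$ from the non-invertibility of the reduced companion matrix and of $I$ minus it. Your route is a bit more self-contained (no appeal to stable finiteness, and the entry constraints come for free from invertibility considerations), at the cost of being less explicit about where the cyclic vector comes from; the paper's construction has the advantage of displaying the $0$/$1$ eigenvalue structure that reappears in the eigenvector argument of Lemma~\ref{lem:9}. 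All the division-ring subtleties (right scalars, columns as images of the standard basis) are handled correctly in your write-up.
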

\begin{proof}
Let $\varphi\in {\rm End}(R^2_R)$, where neither $\varphi $ nor $1-\varphi$ is invertible. We show
that
there exists a cyclic basis $\{x, \varphi(x)\}$ of $R^2_R$;
with respect to this basis, $\varphi$ corresponds to
$\begin{pmatrix}0&w_0\\
                       1&1+w_1\end{pmatrix}$ for some $w_0,w_1\in J(R)$. Write $\overline R=R/J(R)$
and $\bar r=r+J(R)$ (for $r\in R$), and note that each $\phi \in
{\rm End}(R^2_R)$ induces an endomorphism, denoted $\overline \phi$,
in ${\rm End}({\overline R}^2_{\overline R})$. Therefore, neither
$\overline {\varphi}$ nor $1-\overline{\varphi}$ is invertible in
${\rm End}({\overline R}^2_{\overline R})$, since units lift modulo
the radical. Thus, as vector spaces over $\overline R$, ${\rm
ker}(\overline \varphi)\not= 0$ and ${\rm ker}(1-\overline
\varphi)\not= 0$, and so ${\overline R}^2_{\overline R}={\rm
ker}(\overline \varphi)\oplus {\rm ker}(1-\overline \varphi)$. Take
$0\not= v\in {\rm ker}(\overline \varphi)$ and $0\not= w\in {\rm
ker}(1-\overline \varphi)$. Then $\{v, w\}$ is a basis for
${\overline R}^2_{\overline R}$. Now, lift $v$ and $w$ to $R^2$
(keeping the same names), and let $x=v+w\in R^2$. Then
$\varphi(x)=\varphi(v)+\varphi(w)$, which modulo $JR^2$ equals $w$.
In particular, modulo $JR^2$, $x$ and $\varphi(x)$ are $v+w$ and
$w$, which are a basis for ${\overline R}^2_{\overline R}$, so $x$
and $\varphi(x)$ span $R^2_R$ by Nakayama's Lemma. Moreover, $\{x,
\varphi(x)\}$ is a basis for $R^2_R$ since every local ring is
stably finite. Write $\varphi^2(x)=xa+ \varphi(x)b$. Reducing modulo
$JR^2$, this equation becomes $w=(v+w)\bar a+w\bar b$. Since $\{v+w,
w\}$ is linearly independent in ${\overline R}^{2}_{\overline R}$,
we conclude that $\bar a=0$ and $\bar b=1$ (since $w=(v+w)\cdot
0+w$). This is, $a\in J(R)$ and $b\in 1+J(R)$. The matrix
representation of $\varphi$ with respect to the basis $\{x,
\varphi(x)\}$
is $\begin{pmatrix}0&a\\
                       1&b\end{pmatrix}$,  with $a\in J(R)$ and $b\in 1+J(R)$, as desired.
\end{proof}

For a monic polynomial $h(t)=t^{n}+a_{n-1}t^{n-1}+\cdots+a_{1}t+a_{0}\in R[t]$, the
$n\times n$ matrix $C_{h}= \left[
\begin{array}{cccccc}
0&0&0&\cdots&0&-a_{0}\\
1&0&0&\cdots&0&-a_{1}\\
0&1&0&\cdots&0&-a_{2}\\
\vdots&\vdots&\vdots&\vdots&\vdots&\vdots\\
0&0&0&\cdots&0&-a_{n-2}\\
0&0&0&\cdots&1&-a_{n-1}\\
\end{array}
\right]$ is called the {\it companion matrix} of $h(t)$. A square matrix $A$ over $R$ is called a
companion matrix
if $A=C_{h}$ for a monic polynomial $h(t)$ over $R$. Here is one observation that is true for
companion matrices. Lemma 5 below and its proof were introduced to the authors by the referee in order to
give a conceptual proof of Lemma 6.

\begin{lem}\label{lem:companion matrix}
If  $h(t)=t^{n}+a_{n-1}t^{n-1}+\cdots+a_{1}t+a_{0}\in R[t]$, then $C_{h}^{n}+C_{h}^{n-1}a_{n-1}+
\cdots+C_ha_1+Ia_0=0$ as matrices. {\rm (}That is $C_{h}^{n}+C_{h}^{n-1}(a_{n-1}I)+\cdots+C_h(a_1I)+
Ia_0=0$.{\rm )}
\end{lem}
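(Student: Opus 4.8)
The plan is to prove the identity $C_h^n + C_h^{n-1}a_{n-1} + \cdots + C_h a_1 + I a_0 = 0$ by testing it against a natural cyclic basis on which $C_h$ acts by the obvious shift. Concretely, let $F = R^n$ be free with standard basis $\{e_1, \dots, e_n\}$, and let $\varphi \in \mathrm{End}(F_R)$ be the endomorphism whose matrix with respect to this basis is $C_h$. Reading off the columns of $C_h$, one sees that $\varphi(e_j) = e_{j+1}$ for $1 \le j \le n-1$, and $\varphi(e_n) = -\sum_{i=1}^{n} e_i a_{i-1}$ (indexing the coefficients so that $a_{i-1}$ is the entry in row $i$ of the last column). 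In particular $e_j = \varphi^{j-1}(e_1)$ for all $j$, so $\{e_1, \varphi(e_1), \dots, \varphi^{n-1}(e_1)\}$ is a basis of $F$ and $e_1$ is a cyclic vector for $\varphi$.

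Next I would compute $\varphi^n(e_1) = \varphi(e_n) = -\sum_{i=1}^{n} e_i a_{i-1} = -\sum_{i=1}^{n} \varphi^{i-1}(e_1) a_{i-1}$. Applying the polynomial expression $p(\varphi) := \varphi^n + \varphi^{n-1}a_{n-1} + \cdots + \varphi a_1 + a_0$ (where the coefficients act on the right, i.e. as the scalar endomorphisms $a_k I$) to the vector $e_1$ therefore gives
\begin{equation*}
p(\varphi)(e_1) = \varphi^n(e_1) + \sum_{k=0}^{n-1} \varphi^k(e_1) a_k = -\sum_{i=1}^{n} \varphi^{i-1}(e_1) a_{i-1} + \sum_{k=0}^{n-1} \varphi^k(e_1) a_k = 0,
\end{equation*}
after reindexing $i - 1 = k$. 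Since the scalar endomorphisms $a_k I$ are central in $\mathrm{End}(F_R)$, the operator $p(\varphi)$ commutes with $\varphi$; hence $p(\varphi)(e_j) = p(\varphi)(\varphi^{j-1}(e_1)) = \varphi^{j-1}(p(\varphi)(e_1)) = 0$ for every $j$. As $p(\varphi)$ kills a basis of $F$, we get $p(\varphi) = 0$ in $\mathrm{End}(F_R)$, and translating back under the ring isomorphism $\mathrm{End}(F_R) \cong \mathbb{M}_n(R)$ yields $C_h^n + C_h^{n-1}a_{n-1} + \cdots + C_h a_1 + I a_0 = 0$, as claimed.

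The one genuine subtlety — and the only real obstacle — is the noncommutativity of $R$: the coefficients $a_k$ need not be central in $R$, so one must be careful that ``multiplying by $a_k$'' means applying the scalar endomorphism $a_k I$ (equivalently, right multiplication on the module $F_R$), and that these scalar endomorphisms lie in the center of $\mathrm{End}(F_R)$ regardless of whether $a_k$ is central in $R$. This is exactly why the parenthetical remark in the statement rewrites $C_h^{n-1}a_{n-1}$ as $C_h^{n-1}(a_{n-1}I)$: the identity is about matrices with entries in $R$, with $a_kI$ the scalar matrix, not about any substitution of $C_h$ into a polynomial over $R$. Once this bookkeeping is set up correctly, everything else is the routine column computation above; no deep input beyond the $\mathrm{End}(F_R) \cong \mathbb{M}_n(R)$ correspondence is needed.
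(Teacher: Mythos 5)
Your overall strategy is the same as the paper's: show that $T=C_h^{n}+C_h^{n-1}(a_{n-1}I)+\cdots+C_h(a_1I)+Ia_0$ kills the standard basis of $R^n_R$, using the cyclicity $e_j=\varphi^{j-1}(e_1)$ and the fact that a right-linear map vanishing on a basis is zero. However, the step you yourself single out as the crux is false as stated: the scalar matrices $a_kI$ are \emph{not} central in $\mathrm{End}(F_R)\cong{\mathbb M}_n(R)$ unless $a_k$ lies in the center $Z(R)$ of $R$; the center of ${\mathbb M}_n(R)$ is exactly $Z(R)\cdot I$. Concretely, $(a_kI)C_h$ and $C_h(a_kI)$ differ in the last column ($-a_ka_i$ versus $-a_ia_k$), so for noncommutative $R$ --- precisely the case this lemma is needed for --- your claim that $p(\varphi)$ commutes with $\varphi$ has no justification; it is true only a posteriori because $p(\varphi)=0$, so invoking it is circular. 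What you are conflating it with is the true statement that the additive map $v\mapsto va_k$ (genuine right multiplication on the module) commutes with every right $R$-linear endomorphism; but that map is not an element of $\mathrm{End}(F_R)$ when $a_k\notin Z(R)$, and it agrees with the endomorphism $a_kI$ only on vectors with central coordinates, e.g.\ on the standard basis vectors, where $(a_kI)e_j=e_ja_k$.

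The gap is easily repaired, and the repair is exactly the paper's computation: do not commute operators, commute the scalar past the basis vector. From $(a_kI)e_j=e_ja_k$ and right-linearity one gets $Te_j=C_h^{n}e_j+C_h^{n-1}e_ja_{n-1}+\cdots+C_he_ja_1+e_ja_0$, and substituting $e_j=C_h^{j-1}e_1$ and pulling $C_h^{j-1}$ out on the left (the scalars stay harmlessly on the right of the column vectors) gives $Te_j=C_h^{j-1}Te_1=0$, where $Te_1=0$ is your correct computation with the last column of $C_h$. With that replacement your argument coincides with the paper's proof; as written, the justification of the key step rests on a false centrality claim.
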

\begin{proof}
Let $T=C_{h}^{n}+C_{h}^{n-1}a_{n-1}+\cdots+C_ha_1+Ia_0$. We will show that $T$ acts as the
zero endomorphism of $R^{n}_{R}$, and to do so it suffices to show that $Te_i=0$ for all $i$,
where
$\{e_1,\ldots, e_n\}$ is the standard basis for $R^n_R$ (expressed as column vectors). By
construction of $C_h$, $Te_1=0$. Note that for $a\in R$, $(aI)e_i=e_ia$ (whereas this is not
generally true for elements of $R^n_R$). Now,
\begin{equation*}
\begin{split}
Te_i&=C_{h}^{n}e_i+C_{h}^{n-1}(a_{n-1}I)e_i+\cdots+C_h(a_1I)e_i+Ia_0e_i\\
&=C_{h}^{n}e_i+C_{h}^{n-1}e_ia_{n-1}+\cdots+C_he_ia_1+Ie_ia_0.
\end{split}
\end{equation*}
At this point, note that $e_i=C_{h}^{i-1}e_1$, so $Te_i=C_{h}^{i-1}Te_1=0$.
\end{proof}

\begin{lem}\label{lem:9} Let $R$ be a local ring, and suppose that $w_0,w_1\in J(R)$. Write
$h(t)=t^2-(1+w_1)t-w_0$ and consider its companion matrix
\begin{equation*}
C_h=\begin{pmatrix}0&w_0\\
                       1&1+w_1\end{pmatrix}.
\end{equation*}
Then, $C_h$ is strongly clean if and only if $h(t)$ has a left root in $J(R)$ and a left root in
$1+J(R)$.
\end{lem}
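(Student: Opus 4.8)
The plan is to run everything through Nicholson's decomposition (\lemref{lem:1}), using that a local ring is projective-free so that the $\varphi$-invariant summands that occur are free of rank one, and to read roots of $h$ off such summands by means of the companion relation in \lemref{lem:companion matrix}. First note that $C_h$ is automatically a non-trivial candidate: modulo $J(R)$ the matrices $\overline{C_h}=\bigl(\begin{smallmatrix}0&0\\ 1&1\end{smallmatrix}\bigr)$ and $\overline{I-C_h}=\bigl(\begin{smallmatrix}1&0\\ -1&0\end{smallmatrix}\bigr)$ are singular over the division ring $R/J(R)$, so, since units lift modulo $J({\mathbb M}_2(R))={\mathbb M}_2(J(R))$, neither $C_h$ nor $I-C_h$ is invertible over $R$. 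Let $\varphi\in{\rm End}(R^2_R)$ correspond to $C_h$ in the standard basis $\{e_1,e_2\}$, so $\varphi(e_1)=e_2$ and, applying \lemref{lem:companion matrix} to the vector $e_1$ (or just reading it off $C_h$), $\varphi^2(e_1)=e_1w_0+\varphi(e_1)(1+w_1)$; thus $e_1$ is a cyclic vector for $\varphi$.

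The heart of the matter is the following dictionary: for $\lambda\in R$ there is a free rank-one $\varphi$-invariant direct summand $uR\subseteq R^2_R$ with $\varphi(u)=u\lambda$ if and only if some conjugate $r\lambda r^{-1}$ ($r\in U(R)$) of $\lambda$ is a left root of $h$, i.e.\ satisfies $\mu^2-(1+w_1)\mu-w_0=0$. For the ``if'' part, given such a root $\mu$, a one-line computation using the displayed relation for $\varphi^2(e_1)$ shows that $u:=e_1(\mu-1-w_1)+\varphi(e_1)$ satisfies $\varphi(u)=u\mu$; since the $\varphi(e_1)$-coordinate of $u$ is $1$, the pair $\{u,e_1\}$ is a basis and $uR$ is a direct summand. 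For the ``only if'' part, write $u=e_1r_0+\varphi(e_1)r_1$ and compare the $e_1$- and $\varphi(e_1)$-coordinates in $\varphi(u)=u\lambda$ to obtain $w_0r_1=r_0\lambda$ and $r_0+(1+w_1)r_1=r_1\lambda$. A Nakayama argument forces $r_1\in U(R)$ (if $r_1\in J(R)$ then $r_0\in J(R)$ by the second relation, so $uR\subseteq JR^2$ would be a nonzero direct summand lying inside the radical, which is impossible). Eliminating $r_0$ yields $r_1\lambda^2-(1+w_1)r_1\lambda-w_0r_1=0$, and multiplying on the right by $r_1^{-1}$ turns this into $\mu^2-(1+w_1)\mu-w_0=0$ with $\mu:=r_1\lambda r_1^{-1}$. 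Since conjugation fixes $J(R)$ and $1+J(R)$ setwise, $\mu$ lies in $J(R)$ (resp.\ in $1+J(R)$) exactly when $\lambda$ does.

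Granting the dictionary, the lemma follows from \lemref{lem:1}. If $C_h$ is strongly clean then, being non-trivial, $R^2_R=P\oplus Q$ with $P,Q$ nonzero $\varphi$-invariant submodules and $\varphi|_P$, $(1-\varphi)|_Q$ isomorphisms; projective-freeness makes $P=pR$ and $Q=qR$ free of rank one, say $\varphi(p)=p\beta$ and $\varphi(q)=q\alpha$, so $\beta\in U(R)$ and $1-\alpha\in U(R)$, while the non-invertibility of $C_h$ and of $I-C_h$ forces $\alpha\notin U(R)$ and $1-\beta\notin U(R)$, i.e.\ $\alpha\in J(R)$ and $\beta\in1+J(R)$; the dictionary then produces left roots of $h$ lying in $J(R)$ and in $1+J(R)$. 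Conversely, from left roots $\alpha\in J(R)$ and $\beta\in1+J(R)$ the ``if'' part of the dictionary yields $q$ and $p$ with $\varphi(q)=q\alpha$, $\varphi(p)=p\beta$; modulo $J(R)$ the transition matrix $\bigl(\begin{smallmatrix}\beta-1-w_1&\alpha-1-w_1\\ 1&1\end{smallmatrix}\bigr)$ reduces to $\bigl(\begin{smallmatrix}0&-1\\ 1&1\end{smallmatrix}\bigr)$, which is invertible over $R/J(R)$, so $\{p,q\}$ is a basis and $R^2_R=pR\oplus qR$; as $\varphi|_{pR}$ is multiplication by the unit $\beta$ and $(1-\varphi)|_{qR}$ is multiplication by the unit $1-\alpha$, \lemref{lem:1} shows that $C_h$ is strongly clean.

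The one genuinely noncommutative point, and what I expect to be the main obstacle, is the stray unit $r_1$ in the ``only if'' half of the dictionary: the ``eigenvalue'' $\lambda$ of a one-dimensional $\varphi$-invariant summand need not itself be a root of $h$, only a conjugate of it. This is why the criterion must be phrased in terms of left roots and why one has to check that conjugation does not move an element out of $J(R)$, respectively out of $1+J(R)$. Everything else is routine bookkeeping with Nakayama's lemma and the companion relation.
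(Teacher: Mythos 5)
Your overall strategy is the paper's own: reduce to Nicholson's decomposition (\lemref{lem:1}), use projective-freeness of a local ring to obtain two rank-one $\varphi$-invariant free summands, and read a root of $h$ off each summand via the companion relation of \lemref{lem:companion matrix}; and your computations (including the Nakayama argument forcing $r_1\in U(R)$ and the conjugation step) are correct. The one genuine problem is the side on which you let $C_h$ act, and it changes which statement you end up proving. Working with columns, i.e.\ with the right module $R^2_R$, an invariant summand gives $\varphi(u)=u\mu$ and hence the relation $\mu^2-(1+w_1)\mu-w_0=0$. But in this paper (and in the standard usage) a \emph{left} root of $h(t)=t^2-(1+w_1)t-w_0$ is an element $\lambda$ with $\lambda^2-\lambda(1+w_1)-w_0=0$: this is the convention forced by the factorization $f(t)=(t-t_0)(t+a+t_0)$ in \corref{cor:13} and by the recursion $t_0^2-t_0(1+b_0)-c_0=0$ in \thmref{thm:19}. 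What you call a left root is the paper's right root, so as written you have proved the right-root analogue of the lemma rather than the lemma itself; for a fixed $h$ over a noncommutative $R$ the two conditions are not interchangeable without further argument (the paper only passes between the left and right versions at the level of \thmref{thm:11}, item (7), via the opposite ring).

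The repair is small and lands you exactly on the paper's proof: run the same argument on the left module ${}_RR^2$, with $\varphi$ acting on row vectors by $(z)\varphi=zC_h$. A rank-one invariant summand then gives $zC_h=\lambda z$ with $\lambda$ acting on the left, and after normalizing $z$ by a unit on the left so that one coordinate equals $1$ (the mirror image of your multiplication by $r_1^{-1}$), the relation $z\bigl(C_h^2-C_h(1+w_1)-Iw_0\bigr)=0$ from \lemref{lem:companion matrix} yields $\lambda^2-\lambda(1+w_1)-w_0=0$, a genuine left root. This normalization also absorbs the conjugation you single out as the ``main obstacle'': replacing $z$ by $uz$ replaces $\lambda$ by $u\lambda u^{-1}$, and conjugation preserves $J(R)$ and $1+J(R)$, so no separate bookkeeping is needed. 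The converse direction is then your construction verbatim, with row eigenvectors $(1,\lambda_1)$ and $(1,\lambda_2)$, which is exactly what the paper does.
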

\begin{proof}
This is essentially the argument used in \cite[Theorem 3.7.2]{D}. Note that, as matrices,
$C_{h}^{2}-C_h(1+w_1)-Iw_0=0$ by Lemma \ref{lem:companion matrix}. Now, if $C_h$ is a strongly clean
element of
${\mathbb M}_2(R)$, it must be nontrivial, so it acts as a
nontrivial strongly clean endomorphism $\varphi$ of $_RR^2$. So, by Lemma \ref{lem:1},
we can find a decomposition $R^2=P\oplus Q$ where $_RP$ and $_RQ$ are $\varphi$-invariant,
$\varphi$ acts as an automorphism on $P$ and  $1-\varphi$ acts as an isomorphism on $Q$. Since
$\varphi$ is nontrivial strongly clean, $P$ and $Q$ each has
rank $1$. Pick vectors $v_1$ and $v_2$ which are bases of $P$ and $Q$, respectively.
Both $v_1$ and $v_2$ must have at least one coordinate which is a unit.
We can multiply each of $v_1$ and $v_2$ on the left by a unit to assume
that $v_1$ and $v_2$ each have a coordinate which is $1$. Now, for $z
\in \{v_1,v_2\}$, $(z)\varphi$ is in the span of $z$
(since $P$ and $Q$ are $\varphi$-invariant), so $(z)\varphi
=\lambda z$ for some $\lambda$. It is easy to see that the corresponding $\lambda$ for $v_1$ is in
$1+J(R)$ and the other is in $J(R)$. (For instance, see Lemma \ref{lem:2}, or simply
find an explicit $v_1$
and $v_2$ modulo $J(R)$, and lift appropriately to $R$.) Now,
$0=z\big(C_{h}^{2}-C_h(1+w_1)-Iw_0\big)
=\lambda ^2z-\lambda z(1+w_1)-zw_0$. Comparing the component in which $z$ has a $1$,
we have $\lambda^2-\lambda(1+w_1)-w_0=0$. The two $\lambda$ which we have found were
in $J(R)$ and $1+J(R)$, so we have
our left roots of the polynomial $h$ in $J(R)$ and $1+J(R)$.

For the reverse implication, suppose that $\lambda _1\in J(R)$ and
$\lambda_2\in 1+J(R)$ are left roots
of $h$. We will produce a basis of $R^2$ consisting of eigenvectors of $\varphi$.
Consider the row vectors
$v_1=(1,\lambda _1)$ and $v_2=(1,\lambda_2)$ which are easily seen to be
a basis for $_RR^2$ (e.g. one can easily row reduce the corresponding matrix to the identity).
Note that
\begin{equation*}
(v_i)\varphi=\big(\lambda_i, w_0+\lambda_i(1+w_1)\big)=\lambda_i(1,\lambda_i)=\lambda_iv_i.
\end{equation*}
Set $P=Rv_2$ and $Q=Rv_1$. It is clear that $P$ and $Q$ are $\varphi$-invariant, and
that $P\oplus Q=$ $_RR^2$, and that furthermore, $\varphi$ is an isomorphism on $P$,
and $1-\varphi$ is an isomorphism
on $Q$. So $\varphi$ is strongly clean in ${\rm End}(_RR^2)$ by Lemma \ref{lem:1}.
\end{proof}

In \cite[Theorem 3.7.2]{D}, Dorsey proved that for an arbitrary ring $R$, if ${\mathbb M}_n(R)$ ($n\ge 1$) is strongly clean then for each $j\in J(R)$ $t^2-t-j$ has a root in $J(R)$.

For convenience in stating Theorem 7, let
\begin{equation*}
W=\{f\in R[t]: f \,\,\,{\text{is of degree 2, monic, and}}\,\,\,\bar f(0)=\bar f(1)=0\},
\end{equation*}
where $\bar f$ is the image of $f$ in $(R/J)[t]$. Note that $f\in W$ if and only if there are $w_0, w_1\in J(R)$ for which $f(t)=t^2-(1+w_1)t+w_0$.

When doing the second revision of this paper, it came to our attention that, independently,  Bing-jun Li \cite{Li08} has also recently proved the equivalence $(1)\Leftrightarrow (4)$ of Theorem 7.
\begin{thm}\label{thm:11}
The following are equivalent for a local ring $R$:
\begin{enumerate}
\item ${\mathbb M}_2(R)$ is strongly clean.
\item For any $A\in {\mathbb M}_2(R)$, either $A$ is invertible or $I-A$ is invertible or
$A$ is similar to a diagonal matrix.
\item For any $w_0,w_1\in J(R)$, $\begin{pmatrix}0&w_0\\
                       1&1+w_1\end{pmatrix}$ is strongly clean.
\item Every $f\in W$ has a left root in $J(R)$ and a left root in  $1+J(R)$.
\item Every $f\in W$ has a left root in $J(R)$.
\item Every $f\in W$ has a left root in $1+J(R)$.
\item The versions of {\rm (4)} or {\rm (5)} or {\rm (6)} with ``left root'' replaced by ``right root''.
\end{enumerate}
\end{thm}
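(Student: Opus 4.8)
The plan is to establish the cycle $(1)\Rightarrow(2)\Rightarrow(3)\Rightarrow(1)$, then the equivalence $(3)\Leftrightarrow(4)$, then $(4)\Leftrightarrow(5)$ and $(4)\Leftrightarrow(6)$, and finally to obtain all of $(7)$ at once by applying the already-proved part of the theorem to the opposite ring $R^{\rm op}$.

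For $(1)\Rightarrow(2)$ one just invokes the remark following Lemma~\ref{lem:2}: a strongly clean $A\in{\mathbb M}_2(R)$ is either trivial (so $A$ or $I-A$ is invertible) or non-trivial, in which case it is similar to a diagonal matrix $\begin{pmatrix}t_0&0\\ 0&t_1\end{pmatrix}$ with $1-t_0,\,t_1\in J(R)$. For $(2)\Rightarrow(3)$, fix $w_0,w_1\in J(R)$ and set $A=\begin{pmatrix}0&w_0\\ 1&1+w_1\end{pmatrix}$. Reducing modulo $J({\mathbb M}_2(R))={\mathbb M}_2(J(R))$, the images of $A$ and of $I-A$ have a zero row and a zero column respectively over the division ring $R/J(R)$, so neither $A$ nor $I-A$ is invertible; hence by $(2)$, $A$ is similar to some ${\rm diag}(t_0,t_1)$, and since neither ${\rm diag}(t_0,t_1)$ nor $I$ minus it is invertible, exactly one of $t_0,t_1$ lies in $J(R)$ and the other in $1+J(R)$. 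Conjugating by a permutation matrix if necessary, we may assume $t_0\in1+J(R)$ and $t_1\in J(R)$; then ${\rm diag}(t_0,t_1)={\rm diag}(0,1)+{\rm diag}(t_0,t_1-1)$ exhibits it as the sum of a commuting idempotent and unit, so ${\rm diag}(t_0,t_1)$, and therefore $A$, is strongly clean. For $(3)\Rightarrow(1)$, apply Lemma~\ref{lem:8}: an arbitrary $A\in{\mathbb M}_2(R)$ is invertible, or $I-A$ is invertible, or $A$ is similar to a matrix of the form appearing in $(3)$; in each case $A$ is strongly clean because strong cleanness is preserved under conjugation.

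The equivalence $(3)\Leftrightarrow(4)$ is just Lemma~\ref{lem:9}, once one notes that the polynomial $t^2-(1+w_1)t-w_0$ occurring there lies in $W$ and that every element of $W$ arises this way as $w_0,w_1$ range over $J(R)$. For $(4)\Leftrightarrow(5)$ and $(4)\Leftrightarrow(6)$, the device is the substitution $t\mapsto 1-t$: writing a typical $f\in W$ as $f(t)=t^2-(1+w_1)t+w_0$, a direct computation gives $f(1-t)=t^2-(1-w_1)t+(w_0-w_1)\in W$, so $\Phi(f)(t):=f(1-t)$ is a self-inverse bijection of $W$. Moreover, putting $\mu=1-\lambda$ one checks the ring identity $\lambda^2-\lambda(1+w_1)+w_0=\mu^2-\mu(1-w_1)+(w_0-w_1)$, which says precisely that $\lambda$ is a left root of $f$ iff $1-\lambda$ is a left root of $\Phi(f)$ (with the evaluation convention of Lemma~\ref{lem:9}). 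Since $\lambda\in J(R)\iff1-\lambda\in1+J(R)$, it follows that ``$f$ has a left root in $J(R)$'' is equivalent to ``$\Phi(f)$ has a left root in $1+J(R)$''. Thus, assuming $(5)$, for any $f\in W$ both $f$ (directly) and $\Phi(f)$ have a left root in $J(R)$, so $f=\Phi(\Phi(f))$ has a left root in $1+J(R)$, which is $(4)$; assuming $(6)$ the symmetric argument again yields $(4)$.

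Finally, strong cleanness is a left--right symmetric condition, so ${\mathbb M}_2(R)$ is strongly clean iff its opposite ring, which is isomorphic to ${\mathbb M}_2(R^{\rm op})$ via transposition, is strongly clean. As $R^{\rm op}$ is again local with the same Jacobson radical, and as the set $W$ and the cosets $J(R)$, $1+J(R)$ are literally unchanged on passing to $R^{\rm op}$ while a left root over $R^{\rm op}$ of a polynomial is exactly a right root of that polynomial over $R$, applying the already-established equivalences $(1)\Leftrightarrow(4)\Leftrightarrow(5)\Leftrightarrow(6)$ to $R^{\rm op}$ delivers the ``right root'' versions in $(7)$. I expect no conceptual obstacle anywhere in the argument; the delicate points are purely bookkeeping --- respecting the left/right evaluation convention in the $t\mapsto1-t$ identity, and keeping straight what a left root over $R^{\rm op}$ means back in $R$ --- together with the small verification in $(2)\Rightarrow(3)$ that the diagonal form one lands in is actually strongly clean.
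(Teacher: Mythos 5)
Your proposal is correct and follows essentially the same route as the paper: the implications among (1)--(3) via the remark after Lemma~\ref{lem:2} and Lemma~\ref{lem:8}, the equivalence with (4) via Lemma~\ref{lem:9}, the substitution $t\mapsto 1-t$ on $W$ to relate (4), (5), (6), and the opposite-ring argument for (7). The only differences are organizational (a cycle $(1)\Rightarrow(2)\Rightarrow(3)\Rightarrow(1)$ and proving $(5)\Rightarrow(4)$, $(6)\Rightarrow(4)$ directly instead of $(5)\Leftrightarrow(6)$), and your explicit verifications fill in steps the paper leaves as ``clear.''
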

\begin{proof}
$(1)\Leftrightarrow (2)$. ``$\Rightarrow $'' is by the notice after Lemma 2, and ``$\Leftarrow$'' is
clear, since $R$ is local.

$(1)\Leftrightarrow (3)$. Follows immediately from Lemma 4.

$(3)\Leftrightarrow (4)$.  This follows immediately from Lemma \ref{lem:9}.

The equivalence $(5)\Leftrightarrow (6)$ follows from the fact that $f\in W$ if and only if $g(t)=f(1-t)\in W$. Since $(4)$ is the same as $(5)$ plus $(6)$, it follows that $(4)$, $(5)$ and $(6)$ are equivalent.

Finally, $(1)$ is left-right symmetric in the sense that ${\mathbb M}_2(R)$ is strongly clean if and only if ${\mathbb M}_2(R^{op})$ is strongly clean (note that $R^{op}$ is a local ring). The ``right'' analogues of statements $(4)$-$(6)$ are simply the corresponding ``left'' statements for $R^{op}$, which are equivalent by the equivalence of $(1)$-$(6)$ for the opposite ring $R^{op}$, which is local.
\end{proof}

\medskip
In \cite{BDD05a}, for a commutative local ring $R$, the authors defined the notion of an
${\rm SRC}$ (resp., ${\rm SR}$) factorization of a monic polynomial over $R$,
and proved that ${\mathbb M}_n(R)$ is strongly clean iff
every monic polynomial of degree $n$ over $R$ has an ${\rm SRC}$ factorization.
As an easy corollary of Theorem \ref{thm:11},
there is an analog of this for the $2\times 2$ matrix ring over a local ring.
The next definition extends the notion of an ${\rm SRC}$ (resp., ${\rm SR}$)
factorization from a commutative local ring to a local ring.
We are deliberately not using the term {\rm SRC}, because we do not know whether
the definition is the appropriate generalization of {\rm SRC} for general $n$.

\begin{defn}\label{defn:12} Let $R$ be a local ring. A monic polynomial $f(t)\in R[t]$ is said to
have a $(*)$-factorization if $f(t)=g_0(t)g_1(t)=h_{1}(t)h_{0}(t)$, where $g_0(t), g_1(t)$,
$h_{0}(t)$, $h_{1}(t)\in R[t]$ are monic polynomials such that $g_0(0), g_1(1), h_{0}(0),
h_{1}(1)\in U(R)$. If in addition
$\overline{R}[t]\bar{g_{0}}(t)+\overline{R}[t]\bar {g_{1}}(t)=\overline{R}[t]$
and $\bar{h_{0}}(t)\overline{R}[t]+\bar{h_{1}}(t)\overline{R}[t]=\overline{R}[t]$ hold, then
$f(t)$ is said to have a $(**)$-factorization.
\end{defn}

It is interesting to compare the next result with \cite[Corollary 15, Proposition 17]{BDD05a}.

\begin{cor}\label{cor:13}
The following are equivalent for a local ring $R$:
\begin{enumerate}
\item ${\mathbb M}_{2}(R)$ is strongly clean.
\item Every companion matrix in ${\mathbb M}_2(R)$ is strongly clean.
\item Every monic quadratic polynomial over $R$ has a $(*)$-factorization.
\item Every monic quadratic polynomial over $R$ has a $(**)$-factorization.
\end{enumerate}
\end{cor}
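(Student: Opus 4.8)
The plan is to prove the chain of equivalences $(1)\Rightarrow(2)\Rightarrow(3)\Rightarrow(4)\Rightarrow(1)$, leaning heavily on Theorem~\ref{thm:11} and Lemma~\ref{lem:9}, since nearly all the real work is already done there. The implication $(1)\Rightarrow(2)$ is immediate: every companion matrix in ${\mathbb M}_2(R)$ is in particular a matrix in ${\mathbb M}_2(R)$, so if ${\mathbb M}_2(R)$ is strongly clean, every companion matrix is strongly clean. The implication $(2)\Rightarrow(1)$ is the content of the equivalence $(1)\Leftrightarrow(3)$ in Theorem~\ref{thm:11} combined with Lemma~\ref{lem:8}: if every companion matrix is strongly clean, then in particular $\begin{pmatrix}0&w_0\\ 1&1+w_1\end{pmatrix}$ is strongly clean for all $w_0,w_1\in J(R)$ (this is a companion matrix, of $h(t)=t^2-(1+w_1)t-w_0$), which is exactly statement (3) of Theorem~\ref{thm:11}.

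For $(1)\Leftrightarrow(3)$ (the $(*)$-factorization condition), I would first translate: a monic quadratic $f(t)=t^2+bt+c$ over $R$ has a $(*)$-factorization iff $f(t)=(t-\lambda_1)(t-\mu_1)=(t-\lambda_0)(t-\mu_0)$ with the appropriate unit conditions — but since we are factoring a quadratic into monic linear factors, such a factorization $f(t)=(t-\lambda)(t-\mu)$ is equivalent to $\mu$ being a right root and $\lambda$ being a left root of $f$. More precisely, $f(t)=g_0(t)g_1(t)$ with $g_0(t)=t-\alpha$, $g_1(t)=t-\beta$ forces $g_0(0)=-\alpha$, $g_1(1)=1-\beta$, and $f(t)=(t-\alpha)(t-\beta)$ expanding gives that $\beta$ is a right root and $\alpha$ is a left root. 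So the condition $g_0(0),g_1(1)\in U(R)$ says $\alpha\in U(R)$ (equivalently $\alpha\notin J(R)$, so we can't directly conclude $\alpha\in 1+J(R)$ unless $f\in W$) — here is where I must be careful. The cleanest route: show that $f$ has a $(*)$-factorization $f=g_0g_1=h_1h_0$ iff either $f(0)\in U(R)$ (take $g_0=1$... no, monic) — actually I should argue that if $f\notin W$ then $f$ automatically has a $(*)$-factorization, and if $f\in W$ then a $(*)$-factorization exists iff $f$ has a left root in $J(R)$ and a right root in $J(R)$ (the factors $g_1$ with $g_1(1)\in U(R)$ forcing the complementary root into $1+J(R)$), matching conditions (4)–(7) of Theorem~\ref{thm:11}. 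For $f\notin W$: if $\bar f(0)\neq 0$ then $f(0)\in U(R)$ and one checks $C_f$ is invertible, hence strongly clean, and separately one produces the factorization (e.g. $f$ itself times the constant... no — for degree 2 I'd use that $\bar f$ has $0$ or $1$ as a root at most, and handle the cases directly). The equivalence $(3)\Leftrightarrow(4)$ is then handled by observing that the extra coprimality conditions in a $(**)$-factorization are automatic in the $2\times 2$ case: when $f\in W$, $\bar f(t)=t(t-1)$ in $\overline R[t]$, and $\bar g_0=t$, $\bar g_1=t-1$ (or the reverse), which are coprime in $\overline R[t]$ since $\overline R$ is a division ring and $t,t-1$ generate the unit ideal; similarly on the right. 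When $f\notin W$ one factor is a unit mod $J$, so coprimality is trivial. Hence every $(*)$-factorization of a quadratic is already a $(**)$-factorization, giving $(3)\Leftrightarrow(4)$ for free.

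The main obstacle I anticipate is the bookkeeping in matching ``$g_0(0)\in U(R)$, $g_1(1)\in U(R)$'' with ``left root in $J(R)$, left root in $1+J(R)$'' in a way that correctly handles the non-trivial case $f\in W$ versus the trivial cases, and getting the left/right asymmetry right: a factorization $f=g_0g_1$ into monic linear factors corresponds to a \emph{left} root from $g_0$ and the complementary root from $g_1$, while $f=h_1h_0$ corresponds to a \emph{right} root, so I need both the left and right versions from Theorem~\ref{thm:11}(7) to close the loop. Since Theorem~\ref{thm:11} already establishes the left–right symmetry and the equivalence of all of (4)–(7), this obstacle is bureaucratic rather than mathematical, and the proof will be short: I will state the dictionary ``$(*)$-factorization of $f\in W$ $\iff$ $f$ has a left root in $J(R)$ (equivalently $1+J(R)$) and a right root in $J(R)$ (equivalently $1+J(R)$),'' note it holds vacuously for $f\notin W$, and invoke Theorem~\ref{thm:11}, then remark that coprimality mod $J$ is automatic to get $(**)$.
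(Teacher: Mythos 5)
Your skeleton---reducing everything to Theorem~\ref{thm:11} via a dictionary between factorizations and left/right roots, and getting $(1)\Leftrightarrow(2)$ from the companion matrices $\begin{pmatrix}0&w_0\\ 1&1+w_1\end{pmatrix}$---is the same as the paper's, and your analysis of the case $f\in W$ (both factors forced to be linear, the unit conditions pushing one root into $J(R)$ and the other into $1+J(R)$) is essentially right. The genuine gap is the case $f\notin W$ in $(1)\Rightarrow(3),(4)$, which you leave unresolved after rejecting the one move that works. The constant polynomial $1$ \emph{is} monic, and Definition~\ref{defn:12} puts no lower bound on the degrees of the factors; the paper simply takes $f=1\cdot f=f\cdot 1$ when $f(0)\in U(R)$ or $f(1)\in U(R)$, and this is even a $(**)$-factorization because one reduced factor is $1$. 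Your fallback of ``handling the cases directly'' through roots of $\bar f$ cannot be made to work: take $R={\mathbb Q}$ (a local ring for which ${\mathbb M}_2({\mathbb Q})$ is strongly clean) and $f=t^2+1$; then $f$ has no monic linear left or right factor at all, so the degree $(0,2)$ and $(2,0)$ splittings are the \emph{only} available $(*)$-factorizations, and your proposal never produces them.

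A second problem is that your route to $(3)\Rightarrow(4)$ rests on a false claim: it is not true that every $(*)$-factorization of a quadratic is a $(**)$-factorization. Over $R={\mathbb Q}$ again, $f=(t-2)^2=(t-2)(t-2)$ satisfies all four unit conditions (since $-2$ and $-1$ are units), yet ${\mathbb Q}[t](t-2)+{\mathbb Q}[t](t-2)\ne{\mathbb Q}[t]$, so this $(*)$-factorization is not $(**)$. The implication itself is true, but it has to be argued as the paper does: prove $(1)\Rightarrow(4)$ directly (trivial factorization $1\cdot f=f\cdot 1$ when $f\notin W$; when $f\in W$ write $f=t^2+at+b$ and build $f=(t-t_1)(t+a+t_1)=(t-t_0)(t+a+t_0)$ from the left roots $t_0\in J(R)$ and $t_1\in 1+J(R)$ supplied by Theorem~\ref{thm:11}, whose reductions are $t$ and $t-1$, hence coprime on the appropriate side), note $(4)\Rightarrow(3)$ is trivial, and for $(3)\Rightarrow(1)$ extract from a $(*)$-factorization of $t^2-(1+w_1)t-w_0$ a left root in $J(R)$ and a left root in $1+J(R)$, exactly as in your $f\in W$ dictionary. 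One further caution: your parenthetical ``left root in $J(R)$ (equivalently $1+J(R)$)'' is justified only when quantified over all of $W$ (via Theorem~\ref{thm:11}), not for an individual polynomial; this does not hurt the corollary, since Theorem~\ref{thm:11} is invoked at the class level, but it should not be stated as an equivalence for a single $f$.
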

\begin{proof} $(1)\Leftrightarrow (2)$. This holds by the equivalence of `$(1)\Leftrightarrow (4)$'
of Theorem \ref{thm:11}.

$(1)\Rightarrow (4)$. Let $f(t)=t^2+at+b\in R[t]$. If $f(0)\in U(R)$ or $f(1)\in U(R)$, then
\begin{equation*}
f(t)=
\left\{\begin{array}{ll}
1\cdot f(t)=f(t)\cdot 1, &\text{if }f(1)\in U(R);\\
f(t)\cdot 1=1\cdot f(t), &\text{if } f(0)\in
U(R)
\end{array}
\right.
\end{equation*}
is a $(**)$-factorization.  So assume that $f(0), f(1)\in J(R)$.
Then $b\in J(R)$ and $-a=1+(b-f(1))\in 1+J(R)$. By Theorem \ref{thm:11},
$f(t)$ has a left root $t_0\in J(R)$ and a left root $t_1\in 1+J(R)$.
Thus, $f(t)=(t-t_1)(t+a+t_1)=(t-t_0)(t+a+t_0)$ is clearly a
$(**)$-factorization.

$(4)\Rightarrow (3)$. It is obvious.

$(3)\Rightarrow (1)$. For $w_0, w_1\in J(R)$, $f(t)=t^{2}-(1+w_{1})t-w_{0}$
has a $(*)$-factorization. This clearly shows that $f(t)$ has a left root in $J(R)$ and
a left root in $1+J(R)$ by $(3)$.
Hence $(1)$ holds by Theorem \ref{thm:11}.
\end{proof}

\bigskip
\section{Applications and examples}
Conditions (4)-(7) of Theorem \ref{thm:11} are easily verifiable criteria for a $2\times 2$ matrix
ring over a local ring to be strongly clean. We use them here to give new families of strongly clean
rings.

\medskip
For an ideal $I$ of a ring $R$, let $\overline R=R/I$ and write $\bar r=r+I$ for $r\in R$. Further,
for
$f(t)=\sum a_it^i\in R[t]$, we write $\bar f(t)=\sum \bar a_it^i\in \overline R[t]$.
\begin{defn}\label{def:14}\cite{MA06}
A local ring $R$ (may not be commutative)  with ${\overline R}:=R/J(R)$
being a field is called {\it Henselian} if $R[t]$ satisfies Hensel's
lemma: for any monic polynomial $f(t)\in R[t]$, if
$\overline{f}(t)=\alpha (t)\beta(t)$ with
$\alpha(t)$, $\beta(t) \in {\overline R}[t]$ monic and
coprime, then there exist unique monic polynomials $g(t)$ and $h(t)$
in $R[t]$ such that $f(t)=g(t)h(t),\,\, \overline{g}(t)=\alpha(t)$,  and
$\overline{h}(t)=\beta(t)$.
\end{defn}

The authors of \cite{BDD05a} proved that matrix rings of arbitrary size over a commutative Henselian
ring
are all strongly clean (\cite[Example 22]{BDD05a}). With Theorem \ref{thm:11} in hand, the same type
of proof yields the analogous result for $2\times 2$ matrices over arbitrary Henselian rings.

\begin{prop}\label{prop:15}
Let $R$ be a Henselian ring. Then ${\mathbb M}_{2}(R)$ is strongly clean.
\end{prop}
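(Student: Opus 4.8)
The plan is to verify criterion (5) of Theorem~\ref{thm:11}: every $f\in W$ has a left root in $J(R)$. So let $f(t)=t^2-(1+w_1)t+w_0$ with $w_0,w_1\in J(R)$. Passing to $\overline R=R/J(R)$, which is a field since $R$ is Henselian, we get $\overline f(t)=t^2-t=t(t-1)$. The two factors $t$ and $t-1$ are monic and coprime in $\overline R[t]$. Hensel's lemma (Definition~\ref{def:14}) then produces monic $g(t),h(t)\in R[t]$ with $f(t)=g(t)h(t)$, $\overline g(t)=t$, and $\overline h(t)=t-1$. Since $g$ is monic with $\overline g(t)=t$, it has degree $1$, say $g(t)=t-t_0$ with $t_0\in J(R)$ (because $\overline{t_0}=0$). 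Thus $f(t)=(t-t_0)h(t)$, so $t_0$ is a left root of $f$ lying in $J(R)$, which is exactly condition (5).

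Having checked (5), Theorem~\ref{thm:11} immediately gives that ${\mathbb M}_2(R)$ is strongly clean, completing the proof. Alternatively one could symmetrically extract the degree-$1$ monic factor $h(t)=t-t_1$ from $\overline h(t)=t-1$, giving $t_1\in 1+J(R)$ as a left root in $1+J(R)$, which verifies (4) directly; either route works, and in fact the single Hensel factorization hands us both roots at once.

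I anticipate essentially no serious obstacle here — the content is entirely in the reduction to $\overline f(t)=t(t-1)$ and the observation that a monic degree-$2$ polynomial reducing to $t\cdot(t-1)$ is precisely the generic element of $W$, so that Hensel's lemma applies on the nose. The only point requiring a word of care is that Hensel's lemma as stated yields a \emph{two-sided} factorization $f=gh$ in $R[t]$ (not merely a one-sided root), but since $g$ has degree $1$ this factorization is exactly a left-root statement, and similarly reading off $h$ gives the right-root/left-root in $1+J(R)$; noncommutativity of $R$ causes no trouble because we never need to evaluate or multiply beyond pulling off a linear monic factor. So the proof is short: reduce mod $J(R)$, factor $t(t-1)$, lift by Hensel, read off the linear factor, and invoke Theorem~\ref{thm:11}(5)$\Rightarrow$(1).
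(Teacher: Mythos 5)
Your main argument is correct and is essentially the paper's own proof: reduce modulo $J(R)$ to get $\bar f(t)=t(t-1)$, lift the coprime monic factors by Hensel's lemma, observe that the factor reducing to $t$ must be linear, say $t-t_0$ with $t_0\in J(R)$, so $t_0$ is a left root, and conclude by Theorem~\ref{thm:11}. One caution about your parenthetical ``both roots at once'' remark: from $f(t)=(t-t_0)(t-t_1)$ the element $t_1$ is in general only a \emph{right} root of $f$ (left evaluation of $(t-t_0)(t-t_1)$ at $t_1$ gives $t_0t_1-t_1t_0$, which need not vanish when $R$ is noncommutative), so reading off the second factor does not verify condition (4) directly; to produce a left root in $1+J(R)$ you would rerun Hensel with the factors $t-1$ and $t$ in the opposite order. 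This slip is harmless, since condition (5) alone already yields the conclusion.
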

\begin{proof}Let $w_0,w_1\in J(R)$ and let $f(t)=t^2-(1+w_1)t-w_0$. Then $\bar f(t)=t^2-t=
t(t-1)\in \overline R[t]$. By hypothesis, there exist monic polynomials $t-a, t-b\in R[t]$ such that
$f(t)=(t-a)(t-b)$ and $t-\bar a=t$ and $t-\bar b=t-1$. It follows that $a\in J(R)$ is a left root of
$f(t)$.  Hence ${\mathbb M}_{2}(R)$ is strongly clean by Theorem \ref{thm:11}.
\end{proof}
A Henselian ring that is not commutative can be found in  \cite[Example 16]{MA06}.
In order to give another family of strongly clean matrix rings, we need a new notion.

\medskip
Following \cite{BDD05b}, a local ring $R$ is called {\it bleached} if, for all $j\in J(R)$ and
$u\in U(R)$, the additive abelian group endomorphisms ${\it l}_u-{\it r}_j: R\rightarrow R$
($x\mapsto ux-xj$) and ${\it l}_j-{\it r}_u: R\rightarrow R$ ($x\mapsto jx-xu$) are surjective.
By \cite[Example 13]{BDD05b}, some examples of bleached local rings include: commutative local
rings, division rings, local rings $R$ with $J(R)$ nil, local rings $R$ for which some power of each
element of $J(R)$ is central in $R$, local rings $R$ for which some power of each element of $U(R)$
is central in $R$, power series rings over bleached local rings, and skew power series rings
$R[[x; \sigma]]$ of a bleached local ring $R$ with $\sigma$ an automorphism of $R$.

\begin{defn}\label{def:17}
 A local ring $R$ is called {\it weakly bleached} if, for all $j_1, j_2\in J(R)$, the additive
abelian group endomorphisms ${\it l}_{1+j_1}-{\it r}_{j_2}$ and ${\it l}_{j_2}-{\it r}_{1+j_1}$
are surjective.
\end{defn}
By Nicholson \cite[Example 2]{N99} (also see \cite[Theorem 18]{BDD05b}), a local ring $R$ is weakly
bleached  iff the $2\times 2$ upper triangular matrix ring ${\mathbb T}_2(R)$ is strongly clean.
There exist examples, however, of local rings which are not weakly bleached
(e.g. \cite[Example 45]{BDD05b}). Bleached rings are clearly weakly bleached, but the converse is
not true by \cite[Example 38]{BDD05b} together with \cite[Theorem 30]{BDD05b}.

\medskip
The next result was known in the commutative case when $\sigma =1_R$ (see \cite[Theorem 9]{CYZ06}).
\begin{thm}\label{thm:19}
Let $R$ be a weakly bleached local ring and let   $\sigma:R\rightarrow R$ be an endomorphism with
$\sigma(J(R))\subseteq J(R)$. Then the following are equivalent for $n\ge 1$:
\begin{enumerate}
\item ${\mathbb M}_2(R)$ is strongly clean.
\item ${\mathbb M}_2(R[[x;\sigma]])$ is strongly clean.
\item ${\mathbb M}_2(R[x;\sigma]/(x^n))$ is strongly clean.
\end{enumerate}
\end{thm}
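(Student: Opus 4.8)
The plan is to prove the cycle of implications $(1)\Rightarrow(2)\Rightarrow(3)\Rightarrow(1)$, two of which are formal. Write $S:=R[[x;\sigma]]$ and $S_n:=R[x;\sigma]/(x^n)$. First I would record the standard ring‑theoretic facts, valid for any $\sigma$ with $\sigma(1)=1$: an element $\sum a_ix^i$ of $S$ (resp.\ of $S_n$) is a unit precisely when its constant term $a_0$ is a unit of $R$ --- one solves for the coefficients of the inverse recursively, using that each $\sigma^k$ carries units to units --- so $S$ and $S_n$ are local whenever $R$ is, with $J(S)=\{\sum a_ix^i: a_0\in J(R)\}$ and similarly for $S_n$. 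Next, $S_n$ is a homomorphic image of $S$ (truncation $\sum a_ix^i\mapsto\sum_{i<n}a_ix^i$ is a surjective ring homomorphism, with kernel the two-sided ideal of series of order $\ge n$), and $R$ is a homomorphic image of $S_n$ (the map $x\mapsto 0$; an isomorphism when $n=1$). Since any homomorphic image of a strongly clean ring is strongly clean and $\mathbb M_2(-)$ turns surjections into surjections, $(2)\Rightarrow(3)$ and $(3)\Rightarrow(1)$ follow at once.

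The substance is $(1)\Rightarrow(2)$, and here the plan is to verify condition $(5)$ of \thmref{thm:11} for the local ring $S$, i.e.\ to show every $f\in W(S)$ has a left root in $J(S)$; by \thmref{thm:11} this yields that $\mathbb M_2(S)$ is strongly clean. Write such an $f$ as $f(t)=t^2-(1+W_1)t-W_0$ with $W_0=\sum w_{0i}x^i$, $W_1=\sum w_{1i}x^i$ and $w_{00},w_{10}\in J(R)$. I would construct a left root $\tau=\sum_{i\ge0}\tau_ix^i$ with $\tau_0\in J(R)$ (hence $\tau\in J(S)$) by successive approximation, choosing $\tau_0,\tau_1,\dots$ so that $f(\tau_0+\cdots+\tau_{m-1}x^{m-1})\in x^mS$ for every $m$; the limit then satisfies $f(\tau)=0$. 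The base step $m=1$ asks for $\tau_0\in J(R)$ with $\tau_0^2-(1+w_{10})\tau_0-w_{00}=0$, that is, a left root in $J(R)$ of $t^2-(1+w_{10})t-w_{00}\in W(R)$; such a root exists because $\mathbb M_2(R)$ is strongly clean, by the equivalence $(1)\Leftrightarrow(5)$ of \thmref{thm:11} applied to $R$. For the inductive step I would expand $f(\tau_0+\cdots+\tau_{m-1}x^{m-1}+\tau_mx^m)$ modulo $x^{m+1}$, using the skew relation $x^m\tau_0=\sigma^m(\tau_0)x^m$ to see that the coefficient of $x^m$ is
\[
c_m+\tau_0\tau_m+\tau_m\sigma^m(\tau_0)-(1+w_{10})\tau_m,
\]
where $c_m\in R$ is determined by the coefficients already chosen. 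Requiring this to vanish rearranges to
\[
\big({\it l}_{\tau_0}-{\it r}_{1-\sigma^m(\tau_0)+w_{10}}\big)(\tau_m)=-c_m .
\]
Since $\sigma(J(R))\subseteq J(R)$ we have $\sigma^m(\tau_0)\in J(R)$, so $1-\sigma^m(\tau_0)+w_{10}\in 1+J(R)$; thus the displayed operator has the form ${\it l}_{j_2}-{\it r}_{1+j_1}$ with $j_1,j_2\in J(R)$, and is therefore surjective because $R$ is weakly bleached, so the required $\tau_m$ exists. The induction runs, producing $\tau$.

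The hard part is precisely this inductive step: the whole proof hinges on arranging the recursion so that at each stage the unknown coefficient $\tau_m$ is forced to satisfy a single \emph{linear} equation of exactly the shape ${\it l}_{j_2}(\tau_m)-{\it r}_{1+j_1}(\tau_m)=-c_m$ with $j_1,j_2\in J(R)$ --- this is the one configuration the weakly bleached condition is tailored to solve, which is why that hypothesis (rather than the stronger ``bleached'') is the right one. The remaining care is bookkeeping: tracking the relation $xr=\sigma(r)x$ so that the only place $\sigma$ intervenes in the $x^m$-coefficient is through $\sigma^m(\tau_0)$, and invoking $\sigma(J(R))\subseteq J(R)$ to keep $1-\sigma^m(\tau_0)+w_{10}$ a unit congruent to $1$. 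This parallels the commutative argument of \cite[Theorem 9]{CYZ06}, with weak bleachedness playing the role of commutativity.
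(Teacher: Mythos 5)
Your proposal is essentially the paper's proof: the two formal directions via homomorphic images, and $(1)\Rightarrow(2)$ by building a root of $t^2-(1+w_1)t-w_0$ in $J(S)$ coefficient by coefficient, with the degree-$0$ equation solved by \thmref{thm:11} applied to $R$ and each higher coefficient solved by a single linear equation that the weakly bleached hypothesis (together with $\sigma(J(R))\subseteq J(R)$) makes solvable. The only blemish is a bookkeeping slip in your $x^m$-coefficient: under the left-root convention $\tau^2-\tau(1+W_1)-W_0$ the linear term contributes $\tau_m\sigma^m(1+w_{10})$ rather than $(1+w_{10})\tau_m$, so the operator is ${\it l}_{\tau_0}-{\it r}_{1-\sigma^m(\tau_0)+\sigma^m(w_{10})}$ (or, with coefficients on the left, one of the other weakly bleached shapes); either way it is exactly an operator of the form ${\it l}_{j_2}-{\it r}_{1+j_1}$ or ${\it l}_{1+j_1}-{\it r}_{j_2}$ with $j_1,j_2\in J(R)$, so the argument goes through unchanged.
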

\begin{proof}
$(2)\Rightarrow (3)\Rightarrow (1)$. This follows because the image of a strongly clean ring is
again strongly clean.

$(1)\Rightarrow (2)$. Let $S=R[[x;\sigma]]$. Note that $J(S)=J(R)+Sx$. By Theorem \ref{thm:11}, it
suffices to show that,
for any $w_0, w_1\in J(S)$, $t^2-(1+w_1)t-w_0$ has a left root in $J(S)$.
Write
\begin{equation*}
\begin{split}
w_1&=b_0+b_1x+\cdots,\\
w_0&=c_0+c_1x+\cdots,\\
t&=t_0+t_1x+\cdots,
\end{split}
\end{equation*}
where $b_0, c_0\in J(R)$. Then $t^2-t(1+w_1)-w_0=0$ $\Leftrightarrow$
\begin{equation*}
\begin{split}
\begin{cases}
t_{0}^{2}-t_0(1+b_0)-c_0&=0\,\hspace{6.4cm} (P_0)\\
t_k[1-\sigma^k(t_0)+\sigma^k(b_0)]-t_0t_k&=[t_1\sigma(t_{k-1})+\cdots+
t_{k-1}\sigma^{k-1}(t_1)]\\
&\quad  -[t_0b_k+\cdots+t_{k-1}\sigma^{k-1}(b_1)]-c_k\,\,\,\,\,\hspace{1.3cm}(P_k)
\end{cases}
\end{split}
\end{equation*}
for $k=1,2,\ldots$. By Theorem \ref{thm:11}, $t^{2}-(1+b_0)t-c_0$ has a left root $t_0\in J(R)$.
Thus, $1-\sigma^k(t_0)+\sigma^k(b_0)\in 1+J(R)$, so $(P_k)$ is solvable for $t_k$ (because $R$ is
weakly bleached) for $k=1,2,\ldots$. Thus, $\sum _it_ix^i\in J(S)$ is a left root of
$t^2-(1+w_1)t-w_0$.
The proof is complete.
\end{proof}

It is unknown if the commutative Henselian rings are exactly those commutative local rings over
which the matrix rings are strongly clean (see \cite[Problem 23]{BDD05a}). The next example
gives a (noncommutative) local ring $R$ that is not Henselian such that ${\mathbb M}_2(R)$ is
strongly clean.

\begin{exa}\label{exa:21}
Let $D$ be a division ring and $\sigma$ an endomorphism of $D$. Then ${\mathbb M}_2(D[[x;\sigma]])$
is strongly clean by Theorem \ref{thm:19}. If, in particular, $D={\mathbb C}$ and $\sigma$ is the
complex conjugation, then $D[[x;\sigma]]$ is not Henselian by \cite[Example 17]{MA06}.
\end{exa}

The next corollary follows by Proposition \ref{prop:15} and Theorem \ref{thm:19}.
\begin{cor}\label{cor:22}
If $R$ is a weakly bleached Henselian ring and $\sigma$ is an endomorphism of $R$ with
$\sigma(J(R))\subseteq J(R)$, then ${\mathbb M}_2(R[[x; \sigma]])$ and
${\mathbb M}_2\big(R[x; \sigma]/(x^n)\big)$ are strongly clean.
\end{cor}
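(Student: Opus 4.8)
The plan is to combine the two results cited immediately before the corollary. First I would invoke Proposition~\ref{prop:15} to conclude that, since $R$ is assumed Henselian, the matrix ring ${\mathbb M}_2(R)$ is strongly clean. Next I would feed this into Theorem~\ref{thm:19}: the hypotheses there require $R$ to be a weakly bleached local ring and $\sigma$ an endomorphism with $\sigma(J(R))\subseteq J(R)$, both of which are exactly what is assumed in the corollary (noting that a Henselian ring is in particular a local ring). Applying the implication $(1)\Rightarrow(2)$ of Theorem~\ref{thm:19} then yields that ${\mathbb M}_2(R[[x;\sigma]])$ is strongly clean, and applying $(1)\Rightarrow(3)$ yields that ${\mathbb M}_2\big(R[x;\sigma]/(x^n)\big)$ is strongly clean for every $n\ge 1$.

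There is essentially no obstacle here: the corollary is a pure composition of two previously established theorems, so the only thing to check is that the hypotheses match up. The one point worth a sentence is that Proposition~\ref{prop:15} requires a Henselian ring (which by Definition~\ref{def:14} has $R/J(R)$ a field, hence is local), so that the local-ring hypothesis of Theorem~\ref{thm:19} is automatically met. Thus the proof is two lines: ${\mathbb M}_2(R)$ is strongly clean by Proposition~\ref{prop:15}, and then Theorem~\ref{thm:19} upgrades this to the skew power series ring $R[[x;\sigma]]$ and to the truncation $R[x;\sigma]/(x^n)$.
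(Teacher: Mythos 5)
Your proposal is correct and follows exactly the paper's route: the corollary is stated there as an immediate consequence of Proposition~\ref{prop:15} (giving that ${\mathbb M}_2(R)$ is strongly clean) combined with Theorem~\ref{thm:19} (passing to $R[[x;\sigma]]$ and $R[x;\sigma]/(x^n)$). Your remark that a Henselian ring is local, so the hypotheses of Theorem~\ref{thm:19} are satisfied, is the only checking needed and is handled appropriately.
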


\section{Strongly $\pi$-regular matrices}

In this section, we characterize the local rings $R$ for which ${\mathbb M}_2(R)$ is strongly
$\pi$-regular.
This topic is included here mainly because the techniques involved are very similar to those in
previous sections.
\begin{lem} \label{lem:23}\cite{N99}
Let $M_R$ be a module. The following are equivalent for $\varphi\in {\rm End}(M_R)$:
\begin{enumerate}
\item $\varphi$ is strongly $\pi$-regular in ${\rm End}(M_R)$.
\item There is a decomposition $M=P\oplus Q$ where $P$ and $Q$ are $\varphi$-invariant, and
$\varphi |_{P}$ is an isomorphism and $\varphi |_Q$ is nilpotent.
\end{enumerate}
\end{lem}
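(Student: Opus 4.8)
The plan is to deduce this lemma from the Armendariz--Fisher--Snider characterization recalled in the introduction: $\varphi\in{\rm End}(M_R)$ is strongly $\pi$-regular if and only if $M={\rm ker}(\varphi^n)\oplus{\rm Im}(\varphi^n)$ for some $n\ge 1$. Thus the whole task reduces to matching that Fitting decomposition with the decomposition in (2). Throughout I will use freely that ${\rm ker}(\varphi^k)$ and ${\rm Im}(\varphi^k)$ are $\varphi$-invariant for every $k$, and that on a $\varphi$-invariant summand the restriction of $\varphi$ is again an endomorphism whose powers are the restrictions of the corresponding powers of $\varphi$.

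For $(2)\Rightarrow(1)$ I would take a decomposition $M=P\oplus Q$ with $P,Q$ $\varphi$-invariant, $\varphi|_P$ an isomorphism, and $\varphi^m|_Q=0$ for some $m\ge 1$. Since $\varphi^m|_P=(\varphi|_P)^m$ is an automorphism of $P$ and $\varphi^m$ kills $Q$, for $p\in P$ and $q\in Q$ one has $\varphi^m(p+q)=\varphi^m(p)$, which is $0$ precisely when $p=0$; hence ${\rm ker}(\varphi^m)=Q$. The same computation gives ${\rm Im}(\varphi^m)=\varphi^m(P)=P$. So $M={\rm Im}(\varphi^m)\oplus{\rm ker}(\varphi^m)$, and the Armendariz--Fisher--Snider result yields (1).

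For $(1)\Rightarrow(2)$ I would fix $n\ge 1$ with $M={\rm ker}(\varphi^n)\oplus{\rm Im}(\varphi^n)$ and put $Q={\rm ker}(\varphi^n)$ and $P={\rm Im}(\varphi^n)$. Both are $\varphi$-invariant, and $\varphi|_Q$ is nilpotent since $\varphi^n|_Q=0$. Injectivity of $\varphi|_P$ is immediate: if $p\in P$ with $\varphi(p)=0$, then $p\in{\rm ker}(\varphi)\subseteq{\rm ker}(\varphi^n)=Q$, hence $p\in P\cap Q=0$. For surjectivity I would observe that applying $\varphi^n$ to $M={\rm ker}(\varphi^n)\oplus P$ gives ${\rm Im}(\varphi^n)=\varphi^n(P)={\rm Im}(\varphi^{2n})$, so the chain ${\rm Im}(\varphi^n)\supseteq{\rm Im}(\varphi^{n+1})\supseteq\cdots\supseteq{\rm Im}(\varphi^{2n})$ is constant; in particular $\varphi(P)={\rm Im}(\varphi^{n+1})={\rm Im}(\varphi^n)=P$. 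Thus $\varphi|_P$ is an isomorphism and (2) holds.

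Most of this is bookkeeping once the Armendariz--Fisher--Snider theorem is in hand; the one step I expect to need the most care is the surjectivity of $\varphi|_P$ in the last direction, which does not follow from a single application of $\varphi$ but from the stabilization ${\rm Im}(\varphi^n)={\rm Im}(\varphi^{n+1})$ obtained above.
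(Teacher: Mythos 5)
Your proof is correct: both implications are sound, and the one genuinely delicate point---surjectivity of $\varphi|_P$ in $(1)\Rightarrow(2)$---is handled properly via the stabilization $\mathrm{Im}(\varphi^{n})=\varphi^{n}\bigl(\mathrm{Im}(\varphi^{n})\bigr)=\mathrm{Im}(\varphi^{2n})$, which forces $\varphi(P)=\mathrm{Im}(\varphi^{n+1})=\mathrm{Im}(\varphi^{n})=P$; likewise your identification $\ker(\varphi^{m})=Q$, $\mathrm{Im}(\varphi^{m})=P$ in $(2)\Rightarrow(1)$ correctly uses that on the invariant summands $\varphi^{m}|_{P}=(\varphi|_{P})^{m}$ and $\varphi^{m}|_{Q}=(\varphi|_{Q})^{m}$. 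Note, though, that the paper does not prove this lemma at all: it is quoted from Nicholson \cite{N99}, whose own argument is element-theoretic (from the terminating chains one produces an idempotent commuting with $\varphi$, in the spirit of Fitting's lemma, and the decomposition $M=P\oplus Q$ is read off from that idempotent). Your route instead reduces the statement to the Armendariz--Fisher--Snider characterization recalled in the paper's introduction, turning the lemma into a translation between the Fitting decomposition $M=\ker(\varphi^{n})\oplus\mathrm{Im}(\varphi^{n})$ and the decomposition in (2). That is a legitimate and efficient derivation, with the trade-off that the real content is outsourced to the AFS theorem (itself a nontrivial cited result), whereas Nicholson's proof is self-contained at the level of the endomorphism ring; as a matter of logical economy the two results are essentially equivalent repackagings, and your bookkeeping between them is accurate.
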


Units and nilpotent elements of a ring are clearly strongly $\pi$-regular elements. A strongly
$\pi$-regular element $a\in R$ is called a {\it trivial strongly $\pi$-regular element} if $a$ is a
unit or nilpotent, and is called {\it non-trivial} otherwise. Because of Lemma \ref{lem:23}, the
same proof of Lemma \ref{lem:2} works for the next lemma, which is a translation of the
decomposition in Lemma \ref{lem:23} to matrices. The hypothesis here is based on following the
approach of \cite{BDD05a}.

\begin{lem}\label{lem:24}
Let $R$ be a projective-free ring. Then
$A\in {\mathbb M}_n(R)$ is a non-trivial strongly $\pi$-regular matrix iff $A$ is similar to
$\begin{pmatrix}T_0&0\\
                       0&T_1\end{pmatrix}$, where $T_0$ is an invertible matrix and $T_1$ is a
nilpotent matrix.
\end{lem}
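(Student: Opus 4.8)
The plan is to mimic, essentially word for word, the proof of Lemma~\ref{lem:2}, with Lemma~\ref{lem:1} replaced by Lemma~\ref{lem:23}. Identify ${\mathbb M}_n(R)$ with ${\rm End}(F_R)$ for the free module $F=R^n$ via $\varphi\mapsto M_\varphi$, as in the paragraph preceding Lemma~\ref{lem:1}, and let $\varphi$ be the endomorphism of $F$ corresponding to $A$. Since changing the basis of $F$ replaces $M_\varphi$ by a conjugate, it suffices to show: $\varphi$ is a non-trivial strongly $\pi$-regular endomorphism if and only if there is a basis of $F$ in which $\varphi$ is represented by a block matrix $\begin{pmatrix}T_0&0\\0&T_1\end{pmatrix}$ with $T_0$ invertible, $T_1$ nilpotent, and both blocks of positive size. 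This last condition is exactly what encodes non-triviality, since over a nonzero ring a nilpotent matrix of positive size is never invertible and an invertible one is never nilpotent (so the block matrix is then neither a unit nor nilpotent, hence neither is $A$).

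For the forward implication, Lemma~\ref{lem:23} supplies a decomposition $F=P\oplus Q$ into $\varphi$-invariant submodules with $\varphi|_P$ an isomorphism and $\varphi|_Q$ nilpotent (concretely one may take $Q={\rm ker}(\varphi^m)$ and $P={\rm Im}(\varphi^m)$ for $m$ large, but any such decomposition serves). As $P$ and $Q$ are direct summands of the free module $F$ and $R$ is projective-free, each of $P$, $Q$ is free of a uniquely determined rank; concatenating a basis of $P$ with a basis of $Q$ yields a basis of $F$ in which $\varphi$ is block diagonal, with $T_0$ the (invertible) matrix of $\varphi|_P$ and $T_1$ the (nilpotent) matrix of $\varphi|_Q$. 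If $A$ is non-trivial then $P\neq 0$ and $Q\neq 0$: were $Q=0$, then $\varphi$ would be an isomorphism and $A$ a unit, and were $P=0$, then $\varphi$ would be nilpotent and so would $A$. Hence both ranks are positive.

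For the converse, the two coordinate submodules $P$ and $Q$ of $F$ corresponding to the two diagonal blocks are $\varphi$-invariant, $\varphi|_P$ is an isomorphism because $T_0$ is invertible, and $\varphi|_Q$ is nilpotent because $T_1$ is; Lemma~\ref{lem:23} then shows $\varphi$, and hence $A$, is strongly $\pi$-regular, and it is non-trivial by the size observation above. There is no genuine obstacle in this argument; the only points that need a moment's attention are the use of projective-freeness to guarantee that the summands $P$ and $Q$ are actually free (so that the change-of-basis step, i.e. the passage to a similar block-diagonal matrix, is legitimate) and the routine bookkeeping translating ``non-trivial'' on the matrix side into ``both diagonal blocks of positive size'' on the module side.
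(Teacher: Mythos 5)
Your proof is correct and follows exactly the route the paper intends: the paper's own justification for this lemma is the one-line remark that ``the same proof of Lemma~\ref{lem:2} works,'' replacing Lemma~\ref{lem:1} by Lemma~\ref{lem:23}, i.e.\ take the decomposition $F=P\oplus Q$ with $\varphi|_P$ an isomorphism and $\varphi|_Q$ nilpotent, use projective-freeness to choose bases of $P$ and $Q$, and read off the block-diagonal matrix. You have simply written out the details (including the correct translation of non-triviality into both blocks having positive size) that the paper leaves implicit.
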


\begin{cor}\label{cor:25}
Let $R$ be a local ring. Then $A\in {\mathbb M}_2(R)$ is a non-trivial strongly $\pi$-regular
matrix iff
$A$ is similar to $\begin{pmatrix}t_0&0\\
                       0&t_1\end{pmatrix}$, where $t_0\in U(R)$ and $t_1\in R$ is nilpotent.
\end{cor}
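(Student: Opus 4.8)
The plan is to obtain \corref{cor:25} by specializing \lemref{lem:24} to the case $n=2$. Since a local ring is projective-free (\cite[Corollary 5.5, p.22]{C85}, as noted after \lemref{lem:2}), \lemref{lem:24} applies to ${\mathbb M}_2(R)$, and the only real work is to pin down the sizes of the two diagonal blocks. For the forward implication, suppose $A\in {\mathbb M}_2(R)$ is a non-trivial strongly $\pi$-regular matrix and let $\varphi$ be the endomorphism of $R^2$ corresponding to $A$. By \lemref{lem:23} there is a decomposition $R^2=P\oplus Q$ with $P$ and $Q$ both $\varphi$-invariant, $\varphi|_P$ an isomorphism and $\varphi|_Q$ nilpotent. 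Because $R$ is projective-free, $P$ and $Q$ are free of uniquely determined ranks, and these ranks sum to $2$. If $Q=0$ then $A$ is invertible, and if $P=0$ then $A$ is nilpotent; either possibility contradicts the non-triviality of $A$. Hence both $P$ and $Q$ have rank $1$, and choosing a basis vector of $P$ and one of $Q$ exhibits $A$ as similar to $\begin{pmatrix}t_0&0\\0&t_1\end{pmatrix}$, where $t_0\in U(R)$ is the $1\times 1$ invertible block recording $\varphi|_P$ and $t_1$ is a nilpotent element of $R$, the $1\times 1$ nilpotent block recording $\varphi|_Q$.

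For the converse, suppose $A$ is similar to $B=\begin{pmatrix}t_0&0\\0&t_1\end{pmatrix}$ with $t_0\in U(R)$ and $t_1$ nilpotent, and let $\psi$ be the endomorphism of $R^2$ afforded by $B$. Taking $P$ and $Q$ to be the two standard coordinate rank-$1$ submodules of $R^2$, both are $\psi$-invariant, $\psi$ acts on $P$ as multiplication by the unit $t_0$ and on $Q$ as multiplication by the nilpotent $t_1$; thus $\psi|_P$ is an isomorphism and $\psi|_Q$ is nilpotent, and $A$ is strongly $\pi$-regular by \lemref{lem:23}. Finally, $A$ is non-trivial: $t_1$, being nilpotent, is a non-unit and hence lies in $J(R)$, so over $\overline R=R/J(R)$ the reduction of $B$ is $\begin{pmatrix}\bar t_0&0\\0&0\end{pmatrix}$, which is not invertible, whence $B$ and $A$ are not invertible; and $A^k$ is similar to $\begin{pmatrix}t_0^k&0\\0&t_1^k\end{pmatrix}$, whose top-left entry $t_0^k$ is a unit, so $A^k\neq 0$ for every $k$ and $A$ is not nilpotent.

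I anticipate no serious obstacle here: essentially all of the content is already packaged in \lemref{lem:23} and \lemref{lem:24}, and the argument reduces to the rank count that forces the $1+1$ block decomposition when $n=2$, together with the routine verification that the resulting diagonal matrix is genuinely non-trivial. The only point that requires a moment's care is ensuring that non-triviality on one side corresponds exactly to both blocks being nonzero on the other.
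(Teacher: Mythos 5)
Your proposal is correct and follows essentially the same route as the paper, which states the corollary as an immediate specialization of Lemma \ref{lem:24} (via Lemma \ref{lem:23}) using the fact that a local ring is projective-free; your rank count forcing the $1+1$ block sizes and the explicit non-triviality check are exactly the routine verifications the paper leaves implicit.
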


As pointed out in \cite{BDD05a}, it follows from the results of the literature that for any
commutative ring $R$, ${\mathbb M}_n(R)$ is strongly $\pi$-regular iff so is $R$ and that, for a
commutative local ring $R$, ${\mathbb M}_n(R)$ is strongly $\pi$-regular iff so is $R$ iff $J(R)$
is nil. By \cite{BDD05a}, there exists a commutative local ring $R$ such that ${\mathbb M}_2(R)$
is strongly clean, but not strongly $\pi$-regular. Below, we characterize the local rings $R$
for which ${\mathbb M}_2(R)$ is strongly $\pi$-regular.

\begin{lem}\label{lem:27}
Let $A\in {\mathbb M}_2(R)$ where $R$ is a local ring.
If $A\notin {\mathbb M}_2(J(R))\cup {\rm GL}_2(R)$, then $A$ is similar to
$\begin{pmatrix}0&w\\
                       1&r\end{pmatrix}$ where $w\in J(R)$ and $r\in R$.
\end{lem}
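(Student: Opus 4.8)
The statement to prove is essentially a refinement of Lemma~\ref{lem:8}: there, a matrix $A$ that is neither invertible nor has $I-A$ invertible was shown to be similar to a companion-type matrix $\begin{pmatrix}0&w_0\\1&1+w_1\end{pmatrix}$ with $w_0,w_1\in J(R)$; here the only hypothesis we impose is that $A$ is neither in ${\mathbb M}_2(J(R))$ nor invertible, and we want $A$ similar to $\begin{pmatrix}0&w\\1&r\end{pmatrix}$ with $w\in J(R)$ and $r\in R$ \emph{arbitrary}. So the plan is to mimic the proof of Lemma~\ref{lem:8}, but replace the part of the argument that produced the decomposition ${\overline R}^2 = {\rm ker}(\overline\varphi)\oplus{\rm ker}(1-\overline\varphi)$ — which relied on both $\overline\varphi$ and $1-\overline\varphi$ being singular — by a cruder argument that only uses that $\overline\varphi$ is singular but nonzero.

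\textbf{Key steps.} First, pass to $\varphi\in {\rm End}(R^2_R)$ corresponding to $A$, set $\overline R = R/J(R)$, and note that $A\notin {\rm GL}_2(R)$ forces $\overline\varphi$ to be singular (units lift modulo the radical), while $A\notin {\mathbb M}_2(J(R))$ forces $\overline\varphi\ne 0$. Hence $\overline\varphi$ has rank exactly $1$ as an endomorphism of the two-dimensional $\overline R$-vector space ${\overline R}^2$. Second, I would produce a vector $x\in R^2$ such that $\{\,\overline x,\ \overline{\varphi(x)}\,\}$ is a basis of ${\overline R}^2$ — equivalently $\overline{\varphi(x)}\ne 0$ and $\overline x\notin \overline R\,\overline{\varphi(x)}$. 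Since ${\rm Im}\,\overline\varphi$ is one-dimensional, pick $\overline x$ with $\overline{\varphi(x)}\ne 0$ (possible as $\overline\varphi\ne 0$); if it happens that $\overline x$ is a scalar multiple of $\overline{\varphi(x)}$, adjust $x$ by adding a vector in ${\rm ker}\,\overline\varphi$ not in the span of $\overline{\varphi(x)}$ (such a vector exists because ${\rm ker}\,\overline\varphi$ is one-dimensional and, if it coincided with the span of $\overline{\varphi(x)}$, we could choose instead any $\overline y\notin {\rm ker}\,\overline\varphi$; a short case analysis finishes it). Third, lift $x$ to $R^2$; then $\{x,\varphi(x)\}$ reduces to a basis modulo $J R^2$, so by Nakayama it spans $R^2_R$, and since every local ring is stably finite it is a basis of $R^2_R$. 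Finally, writing $\varphi^2(x)=xa+\varphi(x)b$ and reducing modulo $JR^2$, the relation $\overline{\varphi^2(x)}=\overline x\,\bar a+\overline{\varphi(x)}\,\bar b$ together with $\overline{\varphi^2(x)}\in{\rm Im}\,\overline\varphi=\overline R\,\overline{\varphi(x)}$ forces $\bar a=0$, i.e. $a\in J(R)$, with no constraint on $b$. The matrix of $\varphi$ in the basis $\{x,\varphi(x)\}$ is then $\begin{pmatrix}0&a\\1&b\end{pmatrix}$ with $a\in J(R)$ and $b\in R$, which is the desired form with $w=a$, $r=b$.

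\textbf{Main obstacle.} The delicate point is the second step: constructing a cyclic vector $x$ with $\{\overline x,\overline{\varphi(x)}\}$ a basis of ${\overline R}^2$ using only $1\le {\rm rank}\,\overline\varphi\le 2$, rather than the clean eigenspace decomposition available in Lemma~\ref{lem:8}. When ${\rm rank}\,\overline\varphi = 2$ (i.e. $\overline\varphi$ invertible) this case does not arise here, so really ${\rm rank}\,\overline\varphi = 1$, and then ${\rm Im}\,\overline\varphi$ is a line $\ell$; one must just check that there is always a vector $\overline x$ with $\overline\varphi(\overline x)\ne 0$ and $\overline x\notin \ell$ — which can fail only if $\overline\varphi$ annihilates every vector outside $\ell$, impossible since ${\rm ker}\,\overline\varphi$ is itself just a line. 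So the obstacle is mild: a couple of lines of linear algebra over the residue field. Everything after that (Nakayama, stable finiteness, the computation of the matrix entries) is exactly as in Lemma~\ref{lem:8}.
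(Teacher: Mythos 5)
Your proposal is correct and takes essentially the same route as the paper's proof: reduce modulo $J(R)$, observe that $\overline\varphi$ is nonzero and singular (hence of rank one), choose $x$ so that $\{\overline x,\overline{\varphi(x)}\}$ is a basis of ${\overline R}^2$, lift to a cyclic basis $\{x,\varphi(x)\}$ of $R^2_R$ via Nakayama plus stable finiteness, and get $a\in J(R)$ from $\overline x\notin {\rm Im}(\overline\varphi)$. The only cosmetic difference is the selection of the cyclic vector: the paper simply takes $v$ outside ${\rm ker}(\overline\varphi)\cup{\rm Im}(\overline\varphi)$, using that a vector space over the division ring $\overline R$ is never the union of two proper subspaces, whereas you arrive at the same vector by a short case adjustment.
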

\begin{proof}
Let $\varphi\in {\rm End}(R^2_R)$, where $\varphi \notin J({\rm End}(R^2_R))$ and
$\varphi$ is not invertible. We show that there exists a cyclic basis $\{x, \varphi(x)\}$ of
$R^2_R$;
with respect to this basis, $\varphi$ corresponds to
$\begin{pmatrix}0&w\\
                       1&r\end{pmatrix}$ for some $w\in J(R)$ and $r\in R$.
Because  $\overline{\varphi}$ is is not a unit (since units lift modulo the radical),  ${\rm ker}(\overline \varphi)\not= 0$, but also  ${\rm Im}(\overline \varphi)\not= 0$, since  $\overline{\varphi}\not= 0$
(since $\varphi \notin J({\rm End}(R^2_R))$).  In particular, by the rank-nullity theorem, both
 ${\rm ker}(\overline \varphi)$
and ${\rm Im}(\overline \varphi)$ are $1$-dimensional. It follows that
${\rm Im}(\overline \varphi)\cup {\rm ker}(\overline \varphi)\not= {\overline R}^2$ (since a vector space is never
the union of two proper subspaces). Take $v$ outside of the union, and look at $\{v, \overline {\varphi}(v)\}$.
Note that $\overline {\varphi}(v)\not= 0$. And since $v$ is not in ${\rm Im}(\overline \varphi)$, $\{v, \overline {\varphi}(v)\}$ is independent.
Now, lift $v$ to $x$ in $R^2$. Then, modulo $JR^2$, $\{x, \varphi(x)\}$ is
$\{v, \overline {\varphi}(v)\}$, which is a basis for ${\overline R}^2_{\overline R}$.
So $x$ and $\varphi(x)$ span $R^2_R$ by Nakayama's Lemma. Moreover,
$\{x, \varphi(x)\}$ is a basis for $R^2_R$ since every local ring is stably finite.
Write $\varphi^2(x)=xa+
\varphi(x)b$. Reducing modulo $JR^2$, this equation becomes
${\overline \varphi}^2(v)=v\bar a+{\overline \varphi}(v)\bar b$. Since
$v$ is not in ${\rm Im}(\overline \varphi)$,
we conclude that $\bar a=0$. This is, $a\in J(R)$. The matrix
representation of $\varphi$ with respect to the basis $\{x, \varphi(x)\}$
is $\begin{pmatrix}0&a\\
                       1&b\end{pmatrix}$,  with $a\in J(R)$, as desired.
\end{proof}

\begin{lem}\label{lem:28}
Let $R$ be a local ring, and suppose that $u\in U(R)$ and $w\in J(R)$. Write $h(t)=t^2-ut-w$ and
consider
its companion matrix
\begin{equation*}
C_h=\begin{pmatrix}0&w\\
                       1&u\end{pmatrix}.
\end{equation*}
Then, $C_h$ is strongly $\pi$-regular if and only if $h(t)$ has  two left roots, one
in $U(R)$ and one which is nilpotent.
\end{lem}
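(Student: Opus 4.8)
The plan is to mimic the proof of Lemma~\ref{lem:9}, replacing the strongly clean decomposition of Lemma~\ref{lem:1} by the strongly $\pi$-regular decomposition of Lemma~\ref{lem:23}, and using the identity $C_h^2 - C_h u - I w = 0$ supplied by Lemma~\ref{lem:companion matrix}. First I would dispose of the trivial cases: since $w\in J(R)$, $C_h$ is not invertible (its reduction mod $J(R)$ has a nontrivial kernel, as $\det \bar C_h=-\bar w=0$); and since $u\in U(R)$, $C_h$ is not nilpotent (indeed $\bar C_h$ acts as multiplication giving a nonzero image, so $C_h\notin{\mathbb M}_2(J(R))$ and hence $C_h$ is not nilpotent). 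So any strongly $\pi$-regular decomposition of the induced endomorphism $\varphi$ of $_RR^2$ must be \emph{non-trivial}, giving $R^2=P\oplus Q$ with $P,Q$ both $\varphi$-invariant of rank $1$, $\varphi|_P$ an isomorphism and $\varphi|_Q$ nilpotent.

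For the forward direction, pick basis vectors $v_1$ of $Q$ and $v_2$ of $P$; each has a coordinate that is a unit (else the vector would lie in ${\mathbb M}_{1\times 2}(J(R))$ and couldn't be part of a basis, by Nakayama / stable finiteness), so after left multiplication by a unit we may assume each $v_i$ has a coordinate equal to $1$. Since $P,Q$ are $\varphi$-invariant and rank $1$, $(v_i)\varphi=\lambda_i v_i$ for scalars $\lambda_i$; here $\lambda_2\in U(R)$ because $\varphi|_P$ is an isomorphism (on the rank-one free module $P$, the action is multiplication by $\lambda_2$, which must therefore be a unit), and $\lambda_1$ is nilpotent because $\varphi|_Q$ is nilpotent (again $\lambda_1^k=0$ for the $k$ with $(\varphi|_Q)^k=0$). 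Applying the Cayley--Hamilton-type relation, $0=v_i(C_h^2-C_h u-Iw)=\lambda_i^2 v_i-\lambda_i v_i u - v_i w$; comparing the coordinate in which $v_i$ is $1$ yields $\lambda_i^2-\lambda_i u-w=0$, so $\lambda_1,\lambda_2$ are the desired left roots of $h$, one nilpotent and one a unit. For the converse, given a nilpotent left root $\lambda_1$ and a unit left root $\lambda_2$, form the row vectors $v_1=(1,\lambda_1)$, $v_2=(1,\lambda_2)$; a direct computation shows $(v_i)\varphi=\lambda_i v_i$, and $\{v_1,v_2\}$ is a basis of $_RR^2$ (row-reduce, using that $\lambda_2-\lambda_1$ is a unit since $\lambda_2\in U(R)$ and $\lambda_1\in J(R)$). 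Set $P=Rv_2$, $Q=Rv_1$; these are $\varphi$-invariant with $R^2=P\oplus Q$, $\varphi|_P$ is multiplication by the unit $\lambda_2$ hence an isomorphism, and $\varphi|_Q$ is multiplication by the nilpotent $\lambda_1$ hence nilpotent, so $\varphi$ is strongly $\pi$-regular by Lemma~\ref{lem:23}.

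The one point requiring a little care---the analogue of the parenthetical remark in Lemma~\ref{lem:9}---is verifying that in the forward direction the eigenvalue on $P$ is genuinely a unit and the one on $Q$ is genuinely nilpotent, rather than merely deducing this abstractly; but this is immediate from identifying the restricted action on a rank-one free summand with left multiplication by the corresponding scalar. I expect no serious obstacle: the proof is a routine transcription of Lemma~\ref{lem:9} with ``$1+J(R)$'' replaced by ``$U(R)$'' and ``$J(R)$'' replaced by ``nilpotent,'' the only genuinely new input being the already-established Lemma~\ref{lem:companion matrix} and the observation that $\lambda_2-\lambda_1\in U(R)$ guarantees $\{v_1,v_2\}$ is a basis.
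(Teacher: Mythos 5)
Your proposal is correct and takes essentially the same route as the paper, whose entire proof of this lemma is the remark that the proof of Lemma~\ref{lem:9} goes through verbatim once Lemma~\ref{lem:1} is replaced by Lemma~\ref{lem:23} and the obvious changes are made --- exactly the transcription you carried out, including the use of Lemma~\ref{lem:companion matrix} and the observation that $\lambda_2-\lambda_1\in U(R)$. One small repair: the inference ``$C_h\notin{\mathbb M}_2(J(R))$, hence $C_h$ is not nilpotent'' is not valid in general (a matrix outside ${\mathbb M}_2(J(R))$ can certainly be nilpotent); the correct justification is that modulo $J(R)$ the relation of Lemma~\ref{lem:companion matrix} gives $\overline{C_h}^{\,k}=\overline{C_h}\,\bar u^{\,k-1}\neq 0$ for every $k$ (equivalently, $\overline\varphi$ acts on its nonzero image as multiplication by the unit $\bar u$), so no power of $C_h$ lies in ${\mathbb M}_2(J(R))$ and $C_h$ is not nilpotent.
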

\begin{proof}
The proof of Lemma 6 proves this statement, appealing to Lemma 16 instead of Lemma 1, and making
the resulting obvious changes.
\end{proof}

\begin{thm}\label{thm:29} The following are equivalent for a local ring $R$:
\begin{enumerate}
\item ${\mathbb M}_2(R)$ is strongly $\pi$-regular.
\item ${\mathbb M}_2(J(R))$ is nil and, for any $u\in U(R)$ and $w\in J(R)$, $t^2-ut-w$ has two left
roots, one in $U(R)$ and one in $J(R)$.
\item ${\mathbb M}_2(J(R))$ is nil and, for any $u\in U(R)$ and $w\in J(R)$, $t^2-ut-w$ has two
right roots, one in $U(R)$ and one in $J(R)$.
\end{enumerate}
\end{thm}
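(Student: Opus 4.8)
The plan is to mimic the proof of Theorem~\ref{thm:11}: establish $(1)\Rightarrow(2)$ and $(2)\Rightarrow(1)$ by hand, and get $(1)\Leftrightarrow(3)$ from left--right symmetry. The three ingredients I will use are Lemma~\ref{lem:27} (any $A\in{\mathbb M}_2(R)$ which is neither invertible nor in ${\mathbb M}_2(J(R))$ is similar to a companion matrix $\begin{pmatrix}0&w\\1&r\end{pmatrix}$ with $w\in J(R)$), Lemma~\ref{lem:28} (the strongly $\pi$-regular counterpart of Lemma~\ref{lem:9}), and the elementary observation that a strongly $\pi$-regular element of any ring that happens to lie in the Jacobson radical is nilpotent: if $a\in J(S)$ and $a^{n}=a^{n+1}b$, then $a^{n}(1-ab)=0$ with $1-ab\in U(S)$, so $a^{n}=0$.

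For $(1)\Rightarrow(2)$: since $J({\mathbb M}_2(R))={\mathbb M}_2(J(R))$ and every element of ${\mathbb M}_2(R)$ is strongly $\pi$-regular, the observation above applied inside $S={\mathbb M}_2(R)$ shows that every element of ${\mathbb M}_2(J(R))$ is nilpotent, i.e.\ ${\mathbb M}_2(J(R))$ is nil. For the root condition, fix $u\in U(R)$ and $w\in J(R)$; the companion matrix $C_h$ of $h(t)=t^{2}-ut-w$ is strongly $\pi$-regular, so Lemma~\ref{lem:28} produces a left root of $h$ in $U(R)$ and a nilpotent left root, and the latter lies in $J(R)$ because $R$ is local.

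For $(2)\Rightarrow(1)$: note first that ${\mathbb M}_2(J(R))$ nil forces $J(R)$ nil (apply it to $\mathrm{diag}(j,0)$ for $j\in J(R)$). Let $A\in{\mathbb M}_2(R)$. If $A$ is invertible it is trivially strongly $\pi$-regular; if $A\in{\mathbb M}_2(J(R))$ it is nilpotent by hypothesis, hence strongly $\pi$-regular. Otherwise Lemma~\ref{lem:27} says $A$ is similar to $C=\begin{pmatrix}0&w\\1&r\end{pmatrix}$ with $w\in J(R)$ and $r\in R$, and since strong $\pi$-regularity is invariant under conjugation it suffices to deal with $C$. If $r\in U(R)$, then $C=C_h$ for $h(t)=t^{2}-rt-w$; by $(2)$, $h$ has a left root in $U(R)$ and a left root in $J(R)$, the second of which is nilpotent since $J(R)$ is nil, so $C$ is strongly $\pi$-regular by Lemma~\ref{lem:28}. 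If instead $r\in J(R)$, then $C^{2}=\begin{pmatrix}w&wr\\r&w+r^{2}\end{pmatrix}\in{\mathbb M}_2(J(R))$, which is nil, so $C^{2}$, and hence $C$, is nilpotent. Thus $A$ is strongly $\pi$-regular in every case.

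Finally, $(1)\Leftrightarrow(3)$ is left--right symmetry, exactly as in the last paragraph of the proof of Theorem~\ref{thm:11}: transposition gives a ring isomorphism ${\mathbb M}_2(R)^{op}\cong{\mathbb M}_2(R^{op})$, the ring $R^{op}$ is again local, the properties ``strongly $\pi$-regular'' and ``nil'' are unchanged on passing to the opposite ring, and $\lambda$ is a left root of $t^{2}-ut-w$ in $R^{op}$ precisely when it is a right root of $t^{2}-ut-w$ in $R$; hence statement $(3)$ for $R$ is literally statement $(2)$ for $R^{op}$, and $(1)\Leftrightarrow(3)$ is the already-proved $(1)\Leftrightarrow(2)$ applied to $R^{op}$. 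The one place that needs care is the dichotomy coming out of Lemma~\ref{lem:27}: when the companion form has $r\in J(R)$ one cannot feed it to Lemma~\ref{lem:28} (which demands a unit coefficient), so one argues directly by squaring into ${\mathbb M}_2(J(R))$; and one must remember that a root ``in $J(R)$'' in condition $(2)$ does the job of the ``nilpotent'' root in Lemma~\ref{lem:28} only because ${\mathbb M}_2(J(R))$ being nil drags $J(R)$ down with it.
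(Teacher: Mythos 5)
Your proposal is correct and follows essentially the same route as the paper: $(1)\Rightarrow(2)$ via Lemma~\ref{lem:28} applied to the companion matrix, $(2)\Rightarrow(1)$ via the Lemma~\ref{lem:27} reduction with the same squaring trick when the $(2,2)$-entry lies in $J(R)$, and $(1)\Leftrightarrow(3)$ by passing to the opposite ring, which is one of the two options the paper itself indicates. Your added details (the radical-element-is-nilpotent observation and the remark that ${\mathbb M}_2(J(R))$ nil forces $J(R)$ nil) merely make explicit steps the paper leaves implicit.
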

\begin{proof}
$(1)\Rightarrow (2)$. $(1)$ clearly implies that ${\mathbb M}_2(J(R))$ is nil. For $u\in U(R)$
and $w\in J(R)$,
let $A=\begin{pmatrix}0&w\\
                       1&u\end{pmatrix}$.  By $(1)$, $A$ is strongly $\pi$-regular. Hence, by
Lemma \ref{lem:28},
                       $t^2-ut-w$ has two left roots, one in $U(R)$ and one which is nilpotent.  So
$(2)$ holds.

$(2)\Rightarrow (1)$. Let $A\in {\mathbb M}_2(R)$. We want to show that $A$ is strongly
$\pi$-regular. Because ${\mathbb M}_2(J(R))$ is nil, and every nilpotent element of a ring is strongly $\pi$-regular,
we may assume that $A\notin {\mathbb M}_2(J(R))$ and
$A\notin {\rm GL}_2(R)$. Thus, by Lemma \ref{lem:27}, we may assume that
$A=\begin{pmatrix}0&w\\
                       1&u\end{pmatrix}$ where $u\in R$ and $w\in J(R)$; moreover, we may further
assume that $u\in U(R)$, for otherwise $A^2\in {\mathbb M}_2(J(R))$,
so $A$ is nilpotent. By $(2)$,  $t^2-ut-w=0$ has two left roots,
one in $U(R)$ and one in $J(R)$ (which is nilpotent). Thus, by Lemma \ref{lem:28}, $A$ is strongly
$\pi$-regular.

$(1)\Leftrightarrow (3)$. Similar to the proof of  $(1)\Leftrightarrow (2)$, or alternatively, appeal to the
opposite ring, as in the proof of Theorem 7.
\end{proof}

As mentioned before, for a commutative local ring $R$, ${\mathbb M}_2(R)$ is
strongly $\pi$-regular iff $J(R)$ is nil. As a contrast of this, there exists a local ring
$R$ with $J(R)$ locally nilpotent (thus, ${\mathbb M}_2(J(R))$ is nil), but ${\mathbb M}_2(R)$ is
not strongly $\pi$-regular by \cite{CR98}.
For a left perfect ring $R$, ${\mathbb M}_n(R)$ is again left perfect, so it is
strongly $\pi$-regular. It is worth noting that there exists a non commutative local ring $R$ that is
not one-sided perfect such that ${\mathbb M}_n(R)$ (for each $n\ge 1$) is strongly $\pi$-regular.

\bigskip

\section*{Acknowledgments}
The authors' sincere thanks go to the referee for his insights and numerous valuable comments in
two extensive reports, which have largely improved the presentation of the paper. In particular, the conceptual proofs of Lemmas 4,6, 19 and 20 were suggested to the authors by the referee to replace their matrix-theoretic arguments.
The research was supported by NSERC of Canada (Grant OGP0194196). The first named author was
grateful to the A.G. Hatcher Scholarship and the financial support received from Memorial University of Newfoundland and the Atlantic Algebra Center at MUN respectively.

\end{document}